\documentclass{amsart}

\usepackage{graphicx} 
\usepackage{appendix}
\usepackage{bm}
\usepackage{braket}
\usepackage{enumerate}
\usepackage{overpic}

\newtheorem{theorem}{Theorem}[section]
\newtheorem{corollary}[theorem]{Corollary}
\newtheorem{lemma}[theorem]{Lemma}
\newtheorem{proposition}[theorem]{Proposition}

\theoremstyle{definition}
\newtheorem{example}[theorem]{Example}

\theoremstyle{remark}
\newtheorem*{remark}{Remark}

\newcommand{\sign}{\mathop{\mathrm{sign}}\nolimits}
\newcommand{\lk}{\mathop{\mathrm{lk}}\nolimits}
\renewcommand{\L}{\mathbb{L}}

\title{Purely cosmetic surgeries and Casson--Walker--Lescop invariants}

\author{Kazuhiro Ichihara}
\address{Department of Mathematics, College of Humanities and Sciences, Nihon University, 3-25-40 Sakurajosui, Setagaya-ku, Tokyo 156-8550, JAPAN}
\email{ichihara.kazuhiro@nihon-u.ac.jp}

\author{In Dae Jong}
\address{Department of Mathematics, Kindai University, 3-4-1 Kowakae, Higashiosaka City, Osaka 577-0818, Japan} 
\email{jong@math.kindai.ac.jp}

\author{Yasuyoshi Tsutsumi}
\address{Department of Education, Faculty of Education, Kobe Shinwa University, 7-13-1 Suzurandai Kitamachi, Kobe 651-1111, Japan}
\email{y-tsutsumi@ecip.kobe-shinwa.ac.jp}

\date{\today}

\subjclass[2020]{Primary 57K31, Secondary 57K10}

\keywords{cosmetic surgery, homology $3$-sphere, Casson--Walker-Lescop invariant, rational surgery formula}

\thanks{This work was supported by JSPS KAKENHI Grant Numbers JP22K03301 and JP22K03324.}

\begin{document}

\begin{abstract}
Using the rational surgery formula for the Casson--Walker--Lescop invariant of links in the $3$-sphere, we show that any null-homologous knot in a rational homology sphere admits at most two pairs of integral purely cosmetic surgeries. 
We also present constraints for null-homologous knots in certain $3$-manifolds with the first Betti number one or two to admit purely cosmetic surgeries. 
As another application, we show that, for a null-homologous knot in some $3$-manifolds, including $S^2 \times S^1$, there are at most two knots which are inequivalent to the given one, but whose exteriors are orientation-preservingly homeomorphic to that of the given one.
\end{abstract}

\maketitle

\section{Introduction}

This paper is a continuation of the study of cosmetic surgeries on knots in rational homology spheres by the first and second authors in \cite{IchiharaJong2025}. 

Let $M$ be a closed, orientable $3$-manifold and $K$ a knot in $M$.
The operation of forming a new $3$-manifold
$( M - \mathrm{int} N(K) ) \cup_f (D^2 \times S^1)$ from $M$ 
is called \textit{Dehn surgery} on $K$, where $N(K)$ denotes a regular neighborhood of $K$.
The gluing map $f$ is determined by the isotopy class of a loop on $\partial N(K)$ that is identified with the meridian of the attached solid torus. 
Thus, the isotopy class of the loop is called the \textit{surgery slope}. 
For a knot in the $3$-sphere $S^3$, as is well known, such surgery slopes are parametrized by $\mathbb{Q}\cup\{1/0\}$, where $1/0$ corresponds to the meridional slope. 
Dehn surgery on a link is defined in the same way.

In general, Dehn surgeries on a knot along distinct slopes give distinct manifolds, but not always. 
There are some examples of such surgeries on knots in manifolds with boundary and of those yielding manifolds with reversed orientations. 
See \cite{BleilerHodgsonWeeks} for examples. 
In view of this, a pair of Dehn surgeries on a knot is called \textit{purely cosmetic} if the resulting $3$-manifolds are orientation-preservingly homeomorphic.
It is conjectured that no nontrivial knot in a closed, orientable $3$-manifold admits purely cosmetic surgeries along distinct surgery slopes. 
For further background, see \cite{GordonICM, BleilerHodgsonWeeks} and \cite[Problem 1.81A]{Kirby}.

In this paper, we study purely cosmetic surgeries on knots in $3$-manifolds whose first Betti numbers are at most two. 
Our main tool is the rational surgery formula for the Casson--Walker--Lescop invariant $\lambda$, given in \cite{Lescop96}. 
The invariant $\lambda$ was introduced and studied by Lescop \cite{Lescop96} as a generalization of the Casson--Walker invariant \cite{Walker92}. 

In the case of the first Betti number zero, a $3$-manifold $M$ is called an \emph{integral homology sphere} (resp.\ a \emph{rational homology sphere}) if $M$ has the same homology groups as the 3-sphere. That is, 
$H_{0}(M;\mathbb{Z}) = H_{3}(M;\mathbb{Z}) = \mathbb{Z}$
and 
$H_{i}(M;\mathbb{Z}) = \{0\}$ 
for all other $i$ (resp.
$H_{0}(M;\mathbb{Q}) = H_{3}(M;\mathbb{Q}) = \mathbb{Q}$
and
$H_{i}(M;\mathbb{Q}) = \{0\}$
for all other $i$). 

The following result is an extension of \cite[Theorem 2]{IchiharaJong2025}, in which only rational homology spheres obtained by Dehn surgery on a knot in $S^3$ are considered. 
Here, a Dehn surgery is said to be \emph{integral} if the geometric intersection number of the meridian and a representative of the surgery slope is one.

\begin{theorem}\label{thm1}
A null-homologous knot in a rational homology sphere admits at most two pairs of integral purely cosmetic surgeries. 
\end{theorem}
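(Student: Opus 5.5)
The plan is to reduce purely cosmetic integral surgery to a polynomial equation in the surgery coefficient. Let $K$ be a null-homologous knot in a rational homology sphere $M$. Parametrize integral slopes by $n\in\mathbb{Z}$ relative to the framing given by a Seifert surface; this makes sense because $K$ is null-homologous. Then $|H_1(M_K(n);\mathbb{Z})| = |n|\,|H_1(M;\mathbb{Z})|$, and $M_K(0)$ has first Betti number one. Hence if $M_K(n)\cong M_K(m)$ with $n\neq m$, we must have $m=-n$ with $n\neq 0$. So every pair of integral purely cosmetic surgeries is of the form $\{n,-n\}$ with $n>0$, and it suffices to show that at most two positive integers $n$ satisfy $\lambda(M_K(n))=\lambda(M_K(-n))$.

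To compute $\lambda(M_K(\pm n))$, present $M$ as rational surgery on a link $L=L_1\cup\dots\cup L_k$ in $S^3$. Isotope $K$ into the complement of $L$, so that $M_K(n)$ is surgery on the link $L\cup K$ with $K$ carrying coefficient $n+f$. Here $f$ is the rational correction converting the Seifert framing of $K$ in $M$ into the $S^3$ framing, and it is determined by the linking numbers $\lk(K,L_i)$ and the linking matrix of $L$. I will then apply Lescop's rational surgery formula for $\lambda$ to $L\cup K$ and separate out every term that involves the component $K$. The expected outcome, after converting to Walker's normalization by dividing by $|H_1|/2$, is a formula of the form
\begin{equation*}
\lambda_W(M_K(n)) = \lambda_W(M) + \lambda_W(L(n,1)) + \frac{c_K}{n},
\end{equation*}
where $c_K$ is a constant depending only on $(M,K)$. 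It plays the role of $\Delta_K''(1)/2$, namely the second coefficient of the Conway-type polynomial of $K$ obtained from Lescop's $\zeta$-terms. The key point is that the nullity of $\lk(K,\cdot)$ in $M$ kills all cross terms that would otherwise depend on $n$ in a more complicated way.

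Deriving this formula from Lescop's formula is the main obstacle. Lescop's formula is a sum over sublinks, weighted by determinants of linking submatrices and by terms built from Dedekind sums. I will show that:
\begin{enumerate}[(i)]
\item the sublinks not containing $K$ contribute a multiple of $\lambda(M)$, with the factor $n$ coming from the determinant;
\item the sublinks containing $K$ contribute a Dedekind-sum term that assembles into the lens-space term $\lambda_W(L(n,1))$, together with a term that is constant in $n$.
\end{enumerate}
The null-homology of $K$ is exactly what makes the relevant determinant factor as $n\cdot\det(\text{linking matrix of }L)$. I would first check this on the case $k=0$, where it is \cite[Theorem 2]{IchiharaJong2025}, and then carry out the general expansion.

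Granting the formula, I use $\lambda_W(L(-n,1))=-\lambda_W(L(n,1))$ together with the Dedekind sum $s(1,n)=(n-1)(n-2)/(12n)$. The condition $\lambda_W(M_K(n))=\lambda_W(M_K(-n))$ then becomes
\begin{equation*}
\frac{(n-1)(n-2)}{n}\cdot C = \frac{c_K}{n}
\end{equation*}
for a universal nonzero constant $C$, that is,
\begin{equation*}
n^2-3n+2-c_K/C=0 .
\end{equation*}
This quadratic has at most two roots. Both roots would have to be positive integers, and they then sum to $3$, so the solution set is at most $\{1,2\}$ or a single value. In any case at most two positive values of $n$ occur, which gives at most two pairs of integral purely cosmetic surgeries.
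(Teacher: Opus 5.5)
Your proof is correct in outline and is essentially the paper's argument, rewritten in Walker's normalization. The paper also puts $K$ in the complement of a surgery link for $M$ and splits Lescop's formula into sublinks with and without $K$; Lemma~\ref{lem21} is precisely your (i) and (ii). It then observes that $\lambda(M_K(p))-\lambda(M_K(-p))$ is a quadratic in $p$ whose $p^2$-coefficient is a nonzero multiple of $\det A_N$.

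One step of your derivation has to be repaired. Lescop's formula involves linking numbers in $S^3$, and null-homology of $K$ in $M$ does not force $\lk_{S^3}(K,L_i)=0$. When these are nonzero, two things go wrong. First, $\det E(\L_{N-J};J)$ for $J\not\ni K$ picks up $n$-independent cross terms. Second, $\theta(\L_J)$ for $K\in J$ with $\sharp J\ge 2$ no longer vanishes. So (i) and (ii) are false term by term, and the mechanism you invoke (``nullity of $\lk(K,\cdot)$ in $M$ kills all cross terms'') does not apply.

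The fix is the one the paper uses: isotope $K$ in $M$ so that a Seifert surface for it misses the dual link. Then $\lk_{S^3}(K,L_i)=0$, and the Seifert framings of $K$ in $S^3$ and in $M$ agree, so $f=0$ (in general $f$ is an integer, not merely rational). The linking matrix becomes block diagonal, and your expansion goes through verbatim. Alternatively, the formula you grant is exactly Walker's surgery formula for null-homologous knots in rational homology spheres, so you can simply quote it. A minor correction: the case $k=0$ is the Boyer--Lines formula, whereas \cite[Theorem 2]{IchiharaJong2025} is the case $k=1$.

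Your normalization does buy something. For a nondegenerate linking matrix, $(-1)^{b_-(\L)}\det A_N=|\det A_N|$. Hence the difference in the proof of Theorem~\ref{thm21} equals $2(-1)^{b_-(\L)}\prod_{i=1}^n q_i\,S-\frac{1}{12}\prod_{i=1}^n q_i\,|\det A_N|\,(p-1)(p-2)$, where $S$ is the $\hat a_1$-sum over $J=J'\cup\{0\}$. This is exactly your equation $(n-1)(n-2)=12c_K$, with $c_K=2(-1)^{b_-(\L)}S/|\det A_N|$.

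The paper only uses that the leading coefficient is nonzero. Your form shows that the two roots sum to $3$, so two pairs can occur only as $\{\pm1\}$ and $\{\pm2\}$. This is attained by the unknot in $S^3$, so the bound is sharp. The paper's direct subtraction is shorter: every term linear in $p_0$ cancels at once. It also never needs to pass to $\lambda_W$ or to track which pieces assemble into $\lambda_W(M)$ and $\lambda_W(L(n,1))$.
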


Note that the finiteness of purely cosmetic surgeries on a knot has been proved recently in \cite{Ichihara_2026}. 

In connection with this, the cosmetic surgery problem is closely related to the knot complement problem. 
It is conjectured that if two knots $K_1$ and $K_2$ in a closed, oriented 3-manifold $M$ have complements\footnote{Here, the term ``complement'' is used to mean the ``exterior'' of a knot; that is, it refers to the compact 3-manifold obtained from the ambient manifold by removing an open tubular neighborhood of the knot.} that are homeomorphic via an orientation-preserving homeomorphism, then they are equivalent, i.e., there exists an orientation-preserving homeomorphism of $M$ taking $K_1$ to $K_2$. 
In Section~\ref{sec2}, as an application of our method for computing the Casson--Walker--Lescop invariant, we give some supporting evidence for the conjecture. 
In particular, the following corollaries can be regarded as a partial answer to this conjecture. 
For example, for a null-homologous knot in a 3-manifold obtained by Dehn surgery on a knot in $S^3$, we obtain the following.

\begin{corollary}\label{cor0}
Let $M$ be a $3$-manifold obtained by Dehn surgery on a knot in $S^3$. 
Then, for any null-homologous knot $K$ in $M$, there are at most two knots in $M$ that are inequivalent to $K$ but whose exteriors are orientation-preservingly homeomorphic to the exterior of $K$. 
In particular, for any null-homologous knot in $S^2 \times S^1$ or a lens space, the same holds. 
\end{corollary}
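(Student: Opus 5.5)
The plan is to reduce the knot complement question to a purely cosmetic surgery question, and then use the rational surgery formula for $\lambda$ to limit how many slopes can arise. Let $M = S^3_{p/q}(J)$ for a knot $J \subset S^3$, and let $K \subset M$ be null-homologous. Suppose $K'$ is a knot in $M$ and $h\colon E(K') \to E(K)$ is an orientation-preserving homeomorphism. Then $h$ carries the meridian $\mu'$ of $K'$ to some slope $\gamma$ on $\partial N(K)$. Filling $E(K)$ along $\gamma$ gives a manifold orientation-preservingly homeomorphic to $M$, which is also the filling along the meridian $\mu$. Since $K$ is null-homologous, slopes on $\partial N(K)$ are parametrized by $\mathbb{Q}\cup\{1/0\}$ using the preferred longitude. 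Homology (the order of $H_1$, or when $b_1(M)=1$ the rank) forces $\gamma = 1/n$, so $\gamma$ is a meridian-distance-one slope. If $\gamma=\mu$, then $h$ extends to a homeomorphism of $M$ taking $K'$ to $K$, so the knots are equivalent. Hence each inequivalent $K'$ gives a nontrivial $1/n$-surgery on $K$ with $M_K(1/n)\cong M$ preserving orientation. By Gordon--Luecke-type uniqueness of the slope up to equivalence, distinct inequivalent knots give distinct $n\neq 0$ (the same slope $\gamma$ would give equivalent knots, via composing the homeomorphisms). So the task is to show there are at most two integers $n\ne 0$ with $M_K(1/n)\cong M$.

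Next, realize this by a two-component link in $S^3$. Write $K$ as the image of a knot $\tilde K\subset S^3 - J$. Then $M_K(1/n)$ is surgery on $L=J\cup\tilde K$ with coefficients $(p/q,\ 1/n + \text{framing correction})$. Since $K$ is null-homologous in $M$, the correction is determined by $\lk(J,\tilde K)$ (when $p\neq 0$, the homological condition constrains $\lk$). I would apply Lescop's rational surgery formula and expand $\lambda(M_K(1/n))$ as a function of $n$. In the $b_1=0$ case ($p\ne0$), the Walker normalization makes the formula affine-plus-$1/n$-type terms. It should take the form
\[
\lambda(M_K(1/n)) = \lambda(M) + A\,n + B
\]
with $B$ possibly $0$, where $A$ is essentially a normalized $\Delta''$ term of the link (the second coefficient of the Alexander/Conway polynomial of $K$ in $M$, corrected by Dedekind-sum-type terms that are independent of $n$ because the $J$-coefficient is fixed). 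Likewise, for $p=0$ ($M$ is $S^2\times S^1$-like, $b_1=1$), Lescop's normalization gives $\lambda$ as a polynomial in $n$ of degree at most two, the degree-two part coming from the Torres/Alexander polynomial of $L$.

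Requiring $\lambda(M_K(1/n))=\lambda(M)$ then becomes a polynomial equation in $n$ of degree at most two. Either it has at most two nonzero integer roots, which gives the bound, or it vanishes identically. The main obstacle is the identically vanishing case. There I would need a second invariant that detects $n$. First I would try the order or the linking form of $H_1$, and failing that, the finiteness argument of \cite{Ichihara_2026} or a degree-two Casson-type term (e.g.\ comparing $\lambda$ of $M_K(1/n)$ for the orientation-reversed pair). The delicate computational step is getting the exact $n$-dependence of Lescop's formula, in particular confirming that the Dedekind sum contributions $s(\cdot,\cdot)$ involve only the fixed coefficient $p/q$ and not $n$. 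I would take that from the computation used for Theorem~\ref{thm1}, specialized to coefficient $1/n$ on the second component.
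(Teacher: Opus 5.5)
Your route is the paper's: reduce to non-meridional slopes that return $M$ (this is Proposition~\ref{prop:ck}), use homology to force slopes $1/n$, then apply Lescop's formula to a two-component link. The ``same slope gives equivalent knots'' step only needs composition of homeomorphisms, not Gordon--Luecke. The framing correction is also unnecessary: $K$ can be isotoped to bound a Seifert surface disjoint from the dual knot, so one may take $K=K_0$ with $\lk(K_0,K_1)=0$, where $M$ is $p_1/q_1$-surgery on $K_1$.

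The genuine gap is the step you flag yourself. You never show that $\lambda(M_K(1/n))=\lambda(M)$ fails to hold identically in $n$, and none of your fallbacks can do this:
the order of $H_1$ and the linking form are unchanged by $1/n$-surgery on a null-homologous knot;
\cite{Ichihara_2026} gives finiteness, not a bound of two;
orientation reversal changes $\lambda$ at most by a sign.
As written, the proposal does not prove the corollary.

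The paper closes this step by specializing Theorem~\ref{thm22} to $n=1$. There $c_0=0$ and the $q$-independent coefficient $c_1=-\frac{(-1)^{b_-(\L')}}{24}\cdot\frac{p_1^2+q_1^2+1}{q_1^2}$ is nonzero, so each of $\lambda(M_\pm)=\lambda(M)$ (for $\pm1/q$-surgery) has at most one positive root.

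However, this term does not survive a direct check. In Lemma~\ref{lem21} the singleton terms $-\frac{p_0}{q_0}\det A_{N-\{i\}}\theta(\L_{\{i\}})$ occur twice: once inside the sum over $J\subset N$, and again in the final sum. In the difference, one copy cancels against $\lambda(M)$ and the other survives as $c_1$. Computing directly from \eqref{eq:LescopFormula}, all $q$-independent terms cancel; the $\frac{q^2+2}{q}$ terms cancel against $\frac{\sigma}{8}+\frac{s(\pm1,q)}{2}$ as in the paper. For all $n\in\mathbb{Z}$ this gives
\[
\lambda(M_K(1/n))-\lambda(M)=n\,(-1)^{b_-(\L')}\bigl(p_1\hat a_1(K_0)+q_1\hat a_1(K_0\cup K_1)\bigr),
\]
so your $B$ is always $0$. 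A sanity check: take a split unlink $K_0\cup K_1$ with slopes $(1/q,1)$. Then $M_+\cong M\cong S^3$ for every $q$, yet the paper's expression for $\lambda(M_+)-\lambda(M)$ equals $-1/8$.

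Consequently, when the coefficient above is nonzero there is no nontrivial slope at all. When it vanishes, for example when $K_0$ is a nontrivial knot with $a_2=0$ lying in a ball, $\lambda$ carries no information. Neither your argument nor the paper's covers that case, and some input beyond $\lambda$ is required there.
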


A link in $S^3$ is called \emph{algebraically split} if the linking number of any pair of its components is zero. 

\begin{corollary}\label{cor1}
Let $M$ be a rational homology sphere obtained by Dehn surgery on an algebraically split link in $S^3$ with positive surgery slopes. 
Then, for any null-homologous knot $K$ in $M$, there are at most two knots in $M$ that are inequivalent to $K$ but whose exteriors are orientation-preservingly homeomorphic to the exterior of $K$.
\end{corollary}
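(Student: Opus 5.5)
We reduce Corollary~\ref{cor1} to a statement about purely cosmetic surgeries on $K$, and then bound their number with the rational surgery formula for $\lambda$, as in Theorem~\ref{thm1}.

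\emph{Reduction.} Suppose $K'$ is a knot in $M$ inequivalent to $K$, and $h\colon E(K)\to E(K')$ is an orientation-preserving homeomorphism of the exteriors. Since $K$ is null-homologous and $M$ is a rational homology sphere, the slopes on $\partial E(K)$ are parametrized by $p/q$ using the meridian $\mu$ and the preferred longitude $\ell$. The homeomorphism $h$ carries $\ell$ to $\pm\ell'$, since the preferred longitude is the unique slope that is torsion in the homology of the exterior. Because $K'$ and $K$ are inequivalent, $h(\mu)\neq\mu'$. Set $\gamma=h^{-1}(\mu')$. Then $\gamma$ is a slope with $\Delta(\gamma,\ell)=1$, so $\gamma=1/n$ with $n\neq 0$. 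Filling along $\gamma$ gives $K(1/n)\cong M=K(1/0)$ orientation-preservingly.

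Distinct inequivalent knots $K'$ give distinct $n$: if two of them give the same $\gamma$, then $h_2\circ h_1^{-1}$ extends to a homeomorphism of $M$ carrying $K_1'$ to $K_2'$. It therefore suffices to show that there are at most two integers $n\neq0$ with $M_K(1/n)\cong M$ orientation-preservingly.

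\emph{Invariant computation.} Write $M$ as surgery on an algebraically split link $L=L_1\cup\dots\cup L_m$ with slopes $p_i/q_i>0$. Represent $K$ as a further component $K_0$ in $S^3$. Since $K$ is null-homologous in $M$, the linking numbers satisfy $\lk(K_0,L_i)\,q_i\equiv 0$ in the appropriate sense, and after handle slides we may assume $\lk(K_0,L_i)=0$. Then $L\cup K_0$ is algebraically split.

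For algebraically split links, Lescop's rational surgery formula has the form
\[
\lambda(M_K(1/n)) = \lambda(M) + n\,A + n^2 B,
\]
where the following hold:
\begin{itemize}
\item $B$ comes from terms involving $\prod_i p_i$ and sums of products of Alexander/Conway-type coefficients (second-order coefficients of sublinks containing $K_0$);
\item $A$ collects the remaining linear terms;
\item the Dedekind-sum contributions cancel, since the $L_i$ surgeries are unchanged and $s(1,n)$ pairs off against the framing correction.
\end{itemize}

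In fact $\lambda$ is quadratic in $n$, because for the $1/n$ surgery the torsion-order factor $|H_1|$ is constant. The requirement $\lambda(M_K(1/n))=\lambda(M)$ gives $n(A+nB)=0$. When $B\neq0$ this leaves at most one nonzero $n$, which falls within the allowed two. This is where positivity of the slopes enters: the signs of the $p_i/q_i$ are used to control the coefficients.

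\emph{Main obstacle.} The hardest case is $B=0$ together with $A=0$, where $\lambda$ gives no information at all. Here I would use a second invariant as a backup. The degree-two coefficient of the Alexander polynomial, or the next term in Lescop's formula, should vanish, forcing $K$ to behave like an Alexander-trivial knot. One then needs a different argument, for instance the finiteness and counting from \cite{Ichihara_2026} or Theorem~\ref{thm1} applied to pairs $\{1/n,1/0\}$ with integral-type slopes after reparametrization. Theorem~\ref{thm1} bounds the number of pairs by two, which is exactly the bound claimed.

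In practice I expect the intended route is more direct. Applying the orientation-reversal symmetry $n\mapsto -n$ to $\lambda$ together with positivity of the slopes should make the quadratic coefficient a sum of nonnegative terms. That would pin down the solutions to the at most two values $\pm n_0$, matching ``at most two knots.''
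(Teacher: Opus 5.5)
Your reduction is correct and matches the paper's Proposition~\ref{prop:ck}. An inequivalent $K'$ gives a slope $1/n$ with $n\neq 0$ and $M_K(1/n)\cong M$, and distinct $K'$ give distinct $n$. To get $\lk(K_0,L_i)=0$, handle slides are not available with rational coefficients; the paper instead isotopes $K$ so that it bounds a Seifert surface disjoint from the dual link.

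The genuine gap is the counting step. You assert the form $nA+n^2B$ without deriving it, and you leave open the case where $\lambda$ detects nothing. None of your fallbacks closes that case:
\begin{itemize}
\item Theorem~\ref{thm1} concerns pairs of integral slopes $\pm p$, not the pair $1/n,1/0$.
\item \cite{Ichihara_2026} gives finiteness, not a bound of two.
\item No second invariant is actually specified.
\end{itemize}
Your guess about where positivity enters is also off. Finish the bookkeeping you started; your remark about $s(1,n)$ is right. The contributions of $\theta(\L_{\{0\}})=(q^2+2)/q^2$, of $s(\pm1,q)$ and of the signature shift cancel exactly, because $(-1)^{b_-(\L')}\det A_N=|\det A_N|$ always holds. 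Every term from sublinks avoiding $K_0$ cancels against $\lambda(M)$. Let $M_\pm$ be the $(\pm1/q,p_1/q_1,\dots,p_n/q_n)$-surgery on $L$. Then
\[
\lambda(M_{\pm})-\lambda(M)=\pm\,q\,(-1)^{b_{-}(\L')}\Bigl(\prod_{i=1}^{n}q_i\Bigr)\sum_{J'\subset N}\det B_{N-J'}\,\hat{a}_1(L_{J'\cup\{0\}}),
\]
which is linear in $q$ with no constant term, consistent with Walker's surgery formula \cite{Walker92}. There is no $q^2$ coefficient for positivity to control, and $\lambda$ is blind exactly when this sum vanishes.

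The paper's proof goes through Corollary~\ref{cor21}. From Lemma~\ref{lem21} it obtains $q(\pm c_2q+c_1)=0$ with $c_0=0$. It then uses positivity of the slopes to get $c_1=-\frac{(-1)^{b_-(\L')}}{24}\sum_{i}\det A_{N-\{i\}}\frac{p_i^2+q_i^2+1}{q_i^2}\neq 0$, a $q$-independent term, so each sign admits at most one $q$. That is the step your proposal lacks.

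However, this $c_1$ appears to be an artifact of double counting. The last sum in the $\theta$-part of Lemma~\ref{lem21} repeats the singleton terms $J=\{i\}$, which are already contained in $\sum_{J\subset N}$. Compare \eqref{eq3+} and the appendix, where each singleton is counted once. In the proof of Theorem~\ref{thm22} the full $\sum_{J\subset N}$ cancels against $\lambda(M)$, so only the duplicate survives.

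A decisive check: let $K_0$ be an unknot split from $L'$.
\begin{itemize}
\item Then $M_\pm\cong M$ for every $q$.
\item All $\hat a_1$ terms involving $K_0$ vanish, so $c_2=c_0=0$.
\item The paper's formula would then give $\lambda(M_+)-\lambda(M)=\bigl(\prod_{i=1}^n q_i\bigr)c_1\neq 0$, a contradiction.
\end{itemize}
The same example contradicts Corollary~\ref{cor21} as literally stated. So the obstacle you flagged is genuine and is not removed by the paper's argument either. Positivity of the slopes plays no role in $\lambda(M_\pm)-\lambda(M)$. For null-homologous knots on which the displayed sum vanishes, a proof of the statement needs an ingredient beyond $\lambda$.
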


Moreover, the next result gives a partial extension and an alternative proof of part of the result obtained in \cite{Gainullin} using Heegaard Floer homology, which states that any null-homologous knot in an L-space is determined by the complement. 

\begin{corollary}\label{cor2}
Let $M$ be a connected sum of lens spaces $L(p_i, q_i)$ with coprime positive integers $p_i$ and $q_i$, or a connected sum of $S^2 \times S^1$ and lens spaces $L(p_i, q_i)$ with coprime integers $p_i$ and $q_i$.  
Then, for any null-homologous knot $K$ in $M$, there are at most two knots in $M$ that are inequivalent to $K$ but whose exteriors are orientation-preservingly homeomorphic to the exterior of $K$. 
\end{corollary}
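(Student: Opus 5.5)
The plan is to deduce Corollary~\ref{cor2} from Corollary~\ref{cor1} and from its $b_1=1$ counterpart developed in Section~\ref{sec2}, whose argument also yields the $S^2\times S^1$ case of Corollary~\ref{cor0}. To do so, I will realize each manifold $M$ in the statement as Dehn surgery on a split unlink in $S^3$.

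\emph{First case.} The lens space $L(p_i,q_i)$ is $p_i/q_i$-surgery on the unknot. A connected sum of such spaces is obtained by surgery on a split unlink $U_1\cup\cdots\cup U_m$ in $S^3$ with slopes $p_i/q_i$. A split unlink is trivially algebraically split, since all linking numbers vanish. With $p_i,q_i>0$ every slope is positive, and $M$ is a rational homology sphere because $|H_1(M)|=\prod p_i\neq 0$. Thus $M$ satisfies the hypotheses of Corollary~\ref{cor1}, and the conclusion follows directly. I will check orientation conventions so that $p_i/q_i$-surgery on the unknot gives $L(p_i,q_i)$ and not $-L(p_i,q_i)$; otherwise ``positive'' would have to be read with the opposite sign.

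\emph{Second case.} Here $M=(S^2\times S^1)\#_i L(p_i,q_i)$ is surgery on a split unlink $U_0\cup U_1\cup\cdots\cup U_m$ with slope $0$ on $U_0$ and $p_i/q_i$ on $U_i$. Now $b_1(M)=1$, so I must repeat the mechanism behind Corollaries~\ref{cor0} and~\ref{cor1} in this setting.

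\begin{enumerate}[(i)]
\item Suppose $K'$ is a knot whose exterior is orientation-preservingly homeomorphic to that of $K$. The homeomorphism carries the meridian of $K'$ to a slope $\gamma$ on $\partial N(K)$, and $\gamma$-surgery on $K$ returns $M$. Since $K$ is null-homologous, comparing $H_1$ (its torsion part, or its rank in the $b_1=1$ case) forces $\gamma=1/n$. Inequivalent knots $K'$ give distinct integers $n\neq 0$. So it suffices to show that at most two $n\neq 0$ satisfy $M_K(1/n)\cong M$.
\item Put $K$ into the surgery picture as an extra component of $L=U_0\cup\cdots\cup U_m\cup K$, with $\lk(K,U_j)=0$ since $K$ is null-homologous. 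Apply Lescop's rational surgery formula to express $\lambda(M_K(1/n))$ as an explicit function of $n$. Since all pairwise linking numbers among the unlink components vanish, the linking matrix is diagonal, and the formula reduces to a sum over sublinks involving $K$ and the $U_i$, weighted by products of the $p_i,q_i$.
\item Show that $\lambda(M_K(1/n))-\lambda(M)$ is a polynomial in $n$ of degree at most two vanishing at $n=0$. Then show it is not identically zero unless the relevant Alexander/Conway-type coefficients of $K$ vanish. In that degenerate situation a second equation, such as the degree-two Lescop term or a Torres-type constraint, still has to cut the solutions down to at most two. An equation $an^2+bn=0$ with $(a,b)\neq(0,0)$ has at most one nonzero root, which would already give the bound; if only one of $a,b$ is guaranteed nonzero, the count of two comes from the sign ambiguity in $n$.
\end{enumerate}

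The main obstacle is step (iii). In case 1 the positivity of the slopes is what guarantees nondegeneracy, through the sign of $\prod p_i/q_i$ multiplying the leading coefficient; that is why the hypothesis is there. In case 2 I expect the $0$-framed component $U_0$ to govern the leading term of Lescop's formula when $b_1=1$. There the coefficient should be controlled by the Alexander polynomial of $K\cup U_0$ (as for $S^2\times S^1$ in Corollary~\ref{cor0}), multiplied by $\prod p_i$, which is nonzero regardless of signs. So I would first verify that the lens-space summands contribute only a nonzero multiplicative factor. That would let the $S^2\times S^1$ computation carry over verbatim and explain why arbitrary coprime $p_i,q_i$ are allowed in this case.
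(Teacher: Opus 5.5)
Your Case~1 is the paper's own reduction: a connected sum of lens spaces is surgery on a split unlink with positive slopes, and Corollary~\ref{cor1} (via Corollary~\ref{cor21}) applies. The orientation convention is harmless, since reversing every slope leaves that argument unchanged.

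\textbf{The gap in Case~2.} Step (iii) is missing. You never show that $\lambda(M_K(1/n))-\lambda(M)$ is a nonzero function of $n$. You also defer the case where the relevant Conway coefficients of $K$ vanish to an unspecified ``second equation''. A bound claimed for every null-homologous $K$ cannot rest on coefficients of $K$ that may be zero.

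\textbf{How the paper closes it.} The paper uses a $K$-independent term instead. It applies Theorem~\ref{thm22} with the $0$-framed unknot $U_0$ as $K_1$ and $U_1,\dots,U_m$ as $K_2,\dots,K_n$. Then $\det A_N=0$ forces $c_0=0$, while
$c_1=-\frac{(-1)^{b_-(\L')}}{24}\sum_i\det A_{N-\{i\}}\frac{p_i^2+q_i^2+1}{q_i^2}=-\frac{(-1)^{b_-(\L')}}{12}\prod_{i\ge2}\frac{p_i}{q_i}\neq0$.
This holds whatever $K$ is and whatever the signs of the slopes. In this account, $\hat a_1(K\cup U_0)$ enters only $c_2$ and plays no role.

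\textbf{Why you should not import that step.} Your suspicion appears to be correct. In Lescop's formula for the $(\pm1/q,p_1/q_1,\dots)$-presentation, every term with $0\notin J$ carries the factor $(-1)^{b_-(\L_\pm)}\,q\cdot(\pm\frac1q)=(-1)^{b_-(\L')}$. This includes each singleton $J=\{i\}$. So each such term equals the corresponding term for $L'$ and cancels in $\lambda(M_\pm)-\lambda(M)$.

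Lemma~\ref{lem21} lists the singletons twice: once inside $\sum_{J\subset N}$ and again in $-\sum_i$. That duplicate is all of $c_1$. Moreover, $(-1)^{b_-(\L')}\det A_N=|\det A_N|$ for every real symmetric matrix. Hence what actually survives is
\[
\lambda(M_\pm)-\lambda(M)=\pm(-1)^{b_-(\L')}\,q\prod_{i\ge1}q_i\sum_{J'\subset N}\det B_{N-J'}\,\hat a_1(L_{J'\cup\{0\}}).
\]
This agrees with the paper's own Section~3 computations. For example, $\lambda(N)-\lambda(N')=\pm q_0\,\hat a_1(L)$ for $S^2\times S^1$, with no constant term.

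For a split unknot $K$ this expression vanishes, and indeed every $\pm1/q$-surgery then returns $M$. That contradicts Theorem~\ref{thm22} and Corollary~\ref{cor21} as stated.

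\textbf{Consequences.} Whenever that Conway sum is zero, $\lambda$ carries no information, and neither your sketch nor the paper's argument bounds the number of slopes. The same duplicate is what makes $c_1>0$ in Corollary~\ref{cor21}, so your Case~1 citation inherits the problem. That case can instead be taken from Gainullin's theorem, since a connected sum of lens spaces is an L-space. For the $S^2\times S^1$ summand, an input beyond the Casson--Walker--Lescop invariant is needed.
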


Note that, for a given knot, the finiteness of inequivalent knots whose exteriors are homeomorphic to that of the given knot is shown in \cite{Martelli}.

\medskip

Next, we consider purely cosmetic surgeries on knots in $3$-manifolds with the first Betti number one or two. 
In what follows, $\hat{a}_k(L)$ denotes the $k$-th coefficient of the Conway polynomial of a link $L$ in $S^3$ used in \cite{Lescop96}. 
These coefficients differ slightly from the usual ones (e.g., those used in \cite{IchiharaJong2025}); we refer the reader to the remark at the end of this section for details.
Also, by \emph{$(r_1,\dots,r_n)$-surgery} on a link $L=K_1 \cup \cdots \cup K_n$ (resp.\ \emph{$r$-surgery} on a knot $K$) in $S^3$, we mean Dehn surgery on $L$ (resp.\ on $K$) along slopes corresponding to the rational numbers $r_1, \dots, r_n$ (resp.\ the slope corresponding to the rational number $r$). 

\begin{theorem}\label{thm3}
Let $K$ and $K_1$ be two knots in $S^3$ such that the link $K \cup K_1$ is algebraically split. 
Let $M$ be the $3$-manifold obtained by $0$-surgery on $K_1$, i.e., surgery along the preferred longitudinal slope. 
Suppose that $\hat{a}_1(K \cup K_1) \ne 0$. 
Then, regarded as a knot in $M$, $K$ admits no purely cosmetic surgeries. 
In particular, $K$ is determined by the complement in $M$. 
\end{theorem}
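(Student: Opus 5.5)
Since $K\cup K_1$ is algebraically split, $K$ is null-homologous in $M$, and its slopes are parametrized by $p/q\in\mathbb{Q}\cup\{1/0\}$ using the meridian and the Seifert longitude coming from $S^3$. Then $p/q$-surgery on $K\subset M$ is the $(p/q,0)$-surgery on the link $L=K\cup K_1$ in $S^3$. The plan is to compute $\lambda(M_K(p/q))$ with Lescop's rational surgery formula, for which the linking matrix is $\mathrm{diag}(p/q,0)$. We then compare two distinct slopes $r=p/q$ and $r'=p'/q'$ that would give orientation-preservingly homeomorphic manifolds.

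\textbf{Homological reduction.} For $p\neq 0$ we have $H_1(M_K(p/q))\cong \mathbb{Z}\oplus\mathbb{Z}/|p|$ (up to the torsion of $M$, which is trivial here). For $p=0$ the first Betti number is $2$. A purely cosmetic pair must therefore satisfy $|p|=|p'|$, and the case $p=p'=0$ is impossible since then $r=r'$. So we may assume $p'=\pm p\neq0$ and $q\neq q'$ (or $p'=-p$). Both manifolds have $b_1=1$, so $\lambda$ is the Lescop invariant in the $b_1=1$ regime. There it involves the Alexander polynomial of the manifold, which is a topological invariant and must agree, and one further term that we exploit.

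\textbf{Surgery formula.} Lescop's formula sums over sublinks $I\subseteq\{K,K_1\}$. The terms are, roughly, determinants or cofactors of the linking matrix restricted to the complement of $I$, times $\hat a$-coefficients of $L_I$ (or $\zeta$-type terms for rational framings), plus Dedekind-sum and $\sigma$-type correction terms for the non-integral surgery coefficient on $K$. Because the $K_1$-framing is $0$, any term whose complementary matrix involves $K_1$ alone with determinant $0$ drops out. The surviving terms should be: a term in $\hat a_1(K\cup K_1)$ whose coefficient is $q/p$ (the inverse of the $K$-framing), a term from the knot $K_1$ alone (giving $\Delta_{K_1}''(1)$-type data times something linear in $p/q$), and correction terms depending only on $p$, $q$, and possibly on $\mathrm{sign}$. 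I expect the outcome to be of the form
\[
\lambda(M_K(p/q)) = A\,\frac{p}{q}+B\,\frac{q}{p}\,\hat a_1(K\cup K_1)+ C\, s(q,p)+D,
\]
with a Dedekind sum $s(q,p)$ and with $A,D$ depending on $K_1$ and $K$ only through fixed quantities. Carrying out this bookkeeping correctly, including Lescop's normalizations of $\hat a_k$ and the factor conventions for $b_1=1$, is the main obstacle. I would first check it on the trivial-$K_1$ case $M=S^2\times S^1$ against known values.

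\textbf{Conclusion.} Equate $\lambda$ for $r$ and $r'$, and also use the homeomorphism invariance of the torsion linking form and of the Alexander polynomial. For $b_1=1$ manifolds these should pin down the $p/q$ term, most likely forcing $q\equiv q'^{\pm1}\pmod p$ and killing the $A$ and $D$ differences, possibly using that the $b_1=1$ Lescop invariant's leading part equals $\Delta''$-type data determined by the Alexander polynomial. What remains is an identity of the form $(q/p-q'/p')\,\hat a_1(K\cup K_1)=0$, possibly after adding Dedekind-sum differences that cancel via reciprocity. Since $r\ne r'$ and $\hat a_1\neq0$, this is a contradiction. The case $p'=-p$ with orientation is handled the same way, using that $\lambda$ changes sign under orientation reversal and that the linking forms would differ. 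Finally, if two knots in $M$ have orientation-preservingly homeomorphic exteriors but are inequivalent, the homeomorphism sends the meridian of one to a non-meridional slope of the other. That gives a purely cosmetic pair (with the $1/0$ slope yielding $M$), which we have just excluded, so $K$ is determined by its complement.
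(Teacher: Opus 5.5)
Your strategy is the paper's: realize $r$-surgery on $K\subset M$ as $(r,0)$-surgery on $K\cup K_1$, use homology to reduce to $|p|=|p'|$, and show the Lescop invariants differ by a nonzero multiple of $\hat a_1(K\cup K_1)$. There is a genuine gap, though. The one step that carries the proof, evaluating Lescop's formula, is not done. The shape you predict for it is not what the formula gives. And your argument for absorbing the unknown terms would not work.

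Here $E(\L)=\mathrm{diag}(p/q,0)$, so $\det E(\L)=0$ and the whole $\sigma$/Dedekind-sum line of \eqref{eq:LescopFormula} vanishes. The $J=\{K\}$ terms vanish because the complementary $1\times1$ matrix is $(0)$. Also $\theta(\L_{\{K,K_1\}})=0$ because $\lk(K,K_1)=0$, and $(-1)^{b_-(\L)}=\sign(p)$. What is left is
\[
\lambda\bigl(M_K(p/q)\bigr)=\sign(p)\Bigl(p\,\hat a_1(K_1)+q\,\hat a_1(K\cup K_1)-\frac{p}{12}\Bigr).
\]
There is no $p/q$ term (the $K_1$ contribution is $|p|\,\hat a_1(K_1)$), no Dedekind sum, and the coefficient on $\hat a_1(K\cup K_1)$ is $\sign(p)\,q$, not $q/p$. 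Had $A\,p/q$ or $C\,s(q,p)$ terms really been present, nothing in your plan could remove them. The torsion linking form only gives congruence information modulo $p$, so it cannot cancel a rational quantity like $A(p/q-p/q')$. Moreover, for $b_1=1$ Lescop's invariant equals $\tfrac12\Delta_M''(1)-|\mathrm{Tor}\,H_1(M)|/12$. So invoking the Alexander polynomial adds nothing beyond $\lambda$ itself; there is no separate ``further term'' to exploit.

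With the explicit formula the conclusion is immediate, and your anticipated identity is correct up to the common factor $|p|$. For $|p|=|p'|$ the difference is
\[
\bigl(\sign(p)\,q-\sign(p')\,q'\bigr)\hat a_1(K\cup K_1)=|p|\,(q/p-q'/p')\,\hat a_1(K\cup K_1).
\]
This is $\sign(p)(q-q')\,\hat a_1(K\cup K_1)$ when $p'=p$, and $(q+q')\,\hat a_1(K\cup K_1)$ when $p>0$ and $p'=-p$. So the case $p'=-p$ needs no orientation-reversal argument. That is fortunate: for $b_1=1$, $\lambda$ does not change sign under orientation reversal, and the linking forms of the two manifolds need not differ either (e.g.\ $p=5$, $q=q'=1$).

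Finally, a formula valid only for $q\ge1$ does not cover the pairs $\{1/0,\pm1/q\}$. These share $H_1\cong\mathbb{Z}$ and are exactly the pairs needed for the ``in particular'' clause via Proposition~\ref{prop:ck}. You must compute $\lambda(M)=\hat a_1(K_1)-\tfrac1{12}$ separately from the one-component formula with slope $0/1$. That gives $\lambda\bigl(M_K(\pm1/q)\bigr)-\lambda(M)=\pm q\,\hat a_1(K\cup K_1)\neq0$.
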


We remark that the condition $\hat{a}_1(K \cup K_1) \ne 0$ is equivalent to $a_3(K \cup K_1) \ne 0$, where $a_3(K \cup K_1)$ denotes the usual third coefficient of the Conway polynomial of the link $K \cup K_1$ in $S^3$. 
See the remark in the last paragraph of this section. 

\begin{theorem}\label{thm4}
Let $K$, $K_1$, and $K_2$ be knots in $S^3$ such that the link $K \cup K_1 \cup K_2$ is algebraically split. 
Let $M_{p_1/q_1}$ be the 3-manifold obtained by $(p_1/q_1,0)$-surgery on $K_1 \cup K_2$. 
Suppose that $p_1 \hat{a}_1(K \cup K_2) + q_1 \hat{a}_1(K \cup K_1 \cup K_2) \ne 0$. 
Then, regarded as a knot in $M_{p_1/q_1}$, $K$ admits no purely cosmetic surgeries. 
In particular, $K$ is determined by the complement in $M_{p_1/q_1}$. 
\end{theorem}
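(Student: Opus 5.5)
The plan is to compute $\lambda$ of the manifold $M_{p_1/q_1}(K; r)$ obtained from $M_{p_1/q_1}$ by $r$-surgery on $K$, for $r = p/q$, and show that it is a non-constant affine function of $r$ on the relevant set of slopes. This manifold is the result of $(p/q, p_1/q_1, 0)$-surgery on the algebraically split link $L = K \cup K_1 \cup K_2$ in $S^3$. Since $K$ is null-homologous in $M_{p_1/q_1}$, the homology of the surgered manifold is determined by the linking matrix, which is diagonal with entries $p/q$, $p_1/q_1$, $0$. So $H_1 \cong \mathbb{Z}/|p| \oplus \mathbb{Z}/|p_1| \oplus \mathbb{Z}$, and the first Betti number is one whenever $p \neq 0$ (and $p_1\ne 0$).

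The first step is a homological reduction. A purely cosmetic pair $r = p/q$, $r' = p'/q'$ yields homeomorphic manifolds, so the orders of the torsion subgroups must agree. This forces $|p| = |p'|$, and $p = 0$ can be isolated since it changes the Betti number. Thus we only need to compare the slopes $p/q$ and $\pm p/q'$.

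The second step is to apply Lescop's rational surgery formula \cite{Lescop96}. For an algebraically split link it simplifies considerably: the linking matrix is diagonal and all linking numbers vanish. We expand $\lambda$ as a sum over sublinks $I \subseteq \{K, K_1, K_2\}$. Each term is a product of the surgery coefficients $p_j$ for $j \notin I$ and $q_j$ for $j \in I$, times a Conway-coefficient term $\hat{a}_{1}$ of the sublink $L_I$ (up to the normalization in \cite{Lescop96}). To this we add Dedekind-sum and $\sign$ correction terms depending only on the individual slopes. For a Betti-number-one manifold only the terms involving the $0$-framed component $K_2$ survive, and the normalizing factor is $|H_1|$-type torsion. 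Collecting the $K$-dependent part, I expect
\[
\lambda = C + \frac{q}{p}\,\bigl(p_1 \hat{a}_1(K \cup K_2) + q_1 \hat{a}_1(K \cup K_1 \cup K_2)\bigr)\cdot c + (\text{terms in } s(q,p) \text{ only}),
\]
where $C$ and $c \neq 0$ are independent of $q$, and the last group consists of Dedekind sum contributions that also appear for the unknot. I would verify that these last terms cancel between the two slopes, or else dominate compatibly, by running the same computation for the trivial knot. There $M_{p_1/q_1}(K;p/q)$ is a connected sum with the lens space $L(p,q)$, and the Casson--Walker invariants of $L(p,q)$ and $L(p,q')$ can be compared directly.

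The third step concludes the argument. Equality of $\lambda$ for $p/q$ and $\pm p/q'$ gives a linear relation in $q, q'$ whose coefficient is the hypothesized nonzero quantity $p_1 \hat{a}_1(K \cup K_2) + q_1 \hat{a}_1(K \cup K_1 \cup K_2)$. Combined with the lens-space part, this forces $q = q'$ and equal signs, so the two slopes coincide. For the statement about complements, suppose $K'$ has exterior orientation-preservingly homeomorphic to that of $K$. If the homeomorphism does not send meridian to meridian, then $M_{p_1/q_1}$ arises as a nontrivial surgery on $K$ that is purely cosmetic with the trivial surgery $1/0$. The same computation, now including the slope $1/0$ with the $p = 1$ case, rules this out. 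Hence the meridian is preserved, the homeomorphism extends, and $K'$ is equivalent to $K$.

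The main obstacle is the second step: extracting from Lescop's formula exactly which sublink terms survive when one coefficient is $0$ and the manifold has $b_1 = 1$. In particular I need to confirm that the dependence on $q/p$ enters only through the stated combination plus slope-only terms that are controlled by the lens-space comparison. I would do this by first handling the two-component case, Theorem~\ref{thm3}, and then adding the $K_1$ row of the formula.
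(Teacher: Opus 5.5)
Your outline follows the paper's route: apply Lescop's formula to $(p/q,p_1/q_1,0)$-surgery on $K\cup K_1\cup K_2$, use homology to reduce to slopes $p/q$ versus $\pm p/q'$, and compare separately with the slope $1/0$ to get the complement statement via Proposition~\ref{prop:ck}. However, step 2 carries all the content, and there your expected formula is wrong and your proposed finish would not be a proof. You anticipate residual Dedekind-sum terms $s(q,p)$ and plan to neutralize them by comparing Casson--Walker invariants of $L(p,q)$ and $L(p,q')$, hoping they ``cancel or dominate compatibly''. If such terms really occurred, nothing in your argument would force $q=q'$: $s(q,p)$ and $s(q',p)$ differ in general, and ``combined with the lens-space part'' is not a deduction.

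\textbf{The missing observation.} No such terms exist. In \eqref{eq:LescopFormula} the entire signature/Dedekind-sum line is multiplied by $|\det E(\mathbb{L})|$. Here $E(\mathbb{L})=\mathrm{diag}(p/q,p_1/q_1,0)$ is singular, so that line vanishes identically. Your own unknot check confirms this: it gives $\lambda(M_{p_1/q_1}\# L(p,q))=|p|\,\lambda(M_{p_1/q_1})$, with no lens-space contribution.

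\textbf{Carrying out the computation.} Since the link is algebraically split, $\det E(\mathbb{L}_{N-J};J)$ contains the zero diagonal entry of $K_2$ unless $K_2\in J$. Also $\theta(\mathbb{L}_J)=0$ for $\sharp J\ge 2$, so the $\theta$-sum reduces to $J=\{K_2\}$, which contributes $-pp_1/12$. Mirror if necessary so that $p_1>0$; the quantity $X:=p_1\hat{a}_1(K\cup K_2)+q_1\hat{a}_1(K\cup K_1\cup K_2)$ is mirror invariant. Then $(-1)^{b_-(\mathbb{L})}=\sign(p)$, and you get exactly \eqref{fml32}:
\[
\lambda = |p|\Bigl(p_1\hat{a}_1(K_2)+q_1\hat{a}_1(K_1\cup K_2)-\frac{p_1}{12}\Bigr)+\sign(p)\,q\,X .
\]
The slope dependence is carried by $\sign(p)\,q\,X$, not by a Dedekind sum. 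Moreover, the remaining part depends only on $|p|$. You need this for the comparison of $p/q$ with $-p/q'$, and your ``$C$ independent of $q$'' does not supply it. It follows that
\[
\lambda(p/q)-\lambda(\pm p/q')=\sign(p)(q\mp q')X,
\]
which is nonzero unless the two slopes coincide.

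\textbf{The slope $1/0$.} The two-component formula \eqref{eq3+} gives $\lambda(M_{p_1/q_1})=p_1\hat{a}_1(K_2)+q_1\hat{a}_1(K_1\cup K_2)-p_1/12$. Hence $\lambda(\pm 1/q)-\lambda(M_{p_1/q_1})=\pm qX\neq 0$. With this computation in place, your steps 1 and 3 go through exactly as in the paper.
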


\begin{theorem}\label{thm5}
Let $K$, $K_1$, and $K_2$ be knots in $S^3$ such that the link $K \cup K_1 \cup K_2$ is algebraically split. 
Let $M_{00}$ be the 3-manifold obtained by $(0,0)$-surgery on $K_1 \cup K_2$. 
Suppose that $\hat{a}_1(K \cup K_1 \cup K_2) \ne 0$. 
Then, regarded as a knot in $M_{00}$, $K$ admits no purely cosmetic surgeries. 
In particular, $K$ is determined by the complement in $M_{00}$. 
\end{theorem}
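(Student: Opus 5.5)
We prove Theorem~\ref{thm5} by computing the Casson--Walker--Lescop invariant $\lambda$ of every surgery on $K\subset M_{00}$ via Lescop's rational surgery formula~\cite{Lescop96}, and showing it determines the slope. Note first that in $M_{00}$ the knot $K$ is null-homologous, since $K\cup K_1\cup K_2$ is algebraically split. So its slopes have a preferred parametrization $p/q$, with $p/q$-surgery on $K$ in $M_{00}$ equal to $(p/q,0,0)$-surgery on the link $L=K\cup K_1\cup K_2$ in $S^3$. Write $N_{p/q}$ for the result. Then $H_1(N_{p/q})\cong \mathbb{Z}/p\oplus\mathbb{Z}^2$. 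For a purely cosmetic pair $p/q$, $p'/q'$ this forces $|p|=|p'|$. The case $p=0$ is excluded because only one slope, the longitude, has $p=0$. So $b_1(N_{p/q})=2$.

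Next we expand Lescop's formula for the surgery presentation $(p/q,0,0)$ on $L$. The formula is a sum over sublinks $I\subseteq L$. Each term is $\hat a$-coefficients of $I$ times products of the surgery coefficients of the complementary components, together with linking-matrix and Dedekind-sum terms from the components with nonzero coefficient. Because all linking numbers vanish and $K_1,K_2$ carry the coefficient $0$, most terms die. Here $b_1=2$ is the relevant case: for $b_1=2$, Lescop's invariant is essentially the coefficient of the Alexander polynomial lying in the lowest nonvanishing degree, with a sign and a factor $|\mathrm{Tor}\,H_1|$. The surviving terms are only those whose sublink contains both $K_1$ and $K_2$, namely $I=K_1\cup K_2$ and $I=L$. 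I expect the result to have the shape
\[
\lambda(N_{p/q}) = c\,\bigl( p\,\hat a_1(K_1\cup K_2) + q\,\hat a_1(K\cup K_1\cup K_2)\bigr),
\]
with $c$ a universal nonzero constant, up to a sign depending on $\sign(p)$ or $|p|$ normalization. The normalization in Lescop's definition involves $|H_1|$-type factors and $\sign$ conventions. Getting these exactly right, and checking that no Dedekind-sum or $\hat a_0$ contribution survives, is the main obstacle. I would verify it by tracking the determinant of the reduced linking matrix: with two zero rows, that determinant is zero, so only the top terms survive.

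Granting this formula, suppose $N_{p/q}\cong N_{p'/q'}$ orientation-preservingly. Then $|p|=|p'|$, and we normalize $p,p'>0$. Equal $\lambda$ gives $p\,\hat a_1(K_1\cup K_2)+q\,\hat a_1(L) = p\,\hat a_1(K_1\cup K_2)+q'\,\hat a_1(L)$, hence $(q-q')\,\hat a_1(L)=0$. Since $\hat a_1(L)\ne0$ by hypothesis, $q=q'$, so the slopes coincide and there are no purely cosmetic surgeries.

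For the final assertion, suppose $K'\subset M_{00}$ has exterior orientation-preservingly homeomorphic to that of $K$. Then the image of the meridian of $K'$ is a slope on $\partial N(K)$ whose filling is $M_{00}$, i.e.\ a surgery on $K$ cosmetic with the trivial $1/0$ surgery. This slope is a distance-one or general slope $p/q$ with $|p|=1$ by homology. The argument above with $p'/q'=1/0$ allowed forces it to be the meridian. Including $1/0$ needs a separate check that $\lambda(M_{00})$ also fits the formula with $(p,q)=(1,0)$. Hence the homeomorphism carries meridian to meridian, extends over the solid tori, and gives an equivalence of $K$ and $K'$.
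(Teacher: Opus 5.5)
Your outline follows the paper's route: apply Lescop's formula to the presentation $(p/q,0,0)$ of $L=K\cup K_1\cup K_2$ and compare values of $\lambda$. The shape you guess is right. But the computation that carries the whole proof is only stated as ``I expect'' and ``granting this formula.'' Its sign, which is the one feature the final comparison depends on, is left open.

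With the convention $q>0$ under which Lescop's formula is stated, $E(\L)=\mathrm{diag}(p/q,0,0)$. The expansion then goes as follows.
\begin{itemize}
\item The signature and Dedekind-sum terms are killed by $|\det E(\L)|=0$.
\item Every $\theta$-term vanishes. Either $\sharp J\ge 2$, so $\theta(\L_J)=0$, or $E(\L_{N-J})$ is diagonal with a zero entry.
\item The only surviving $\hat a_1$-terms are $J=\{1,2\}$ and $J=\{0,1,2\}$.
\item The prefactor is $(-1)^{b_-(\L)}=\sign(p)$.
\end{itemize}
This gives
\[
\lambda(N_{p/q})=\sign(p)\bigl(p\,\hat a_1(K_1\cup K_2)+q\,\hat a_1(L)\bigr)\qquad(q>0).
\]
Separately, the presentation $(0,0)$ of $K_1\cup K_2$ gives $\lambda(M_{00})=\hat a_1(K_1\cup K_2)$. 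Both of these you must actually derive.

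The sign matters because your step ``normalize $p,p'>0$'' is not without loss of generality.
\begin{itemize}
\item Slopes $p/q$ and $-p/q'$ with $q,q'>0$ are distinct, and mirroring cannot make both numerators positive.
\item Rewriting $-p/q'$ as $p/(-q')$ takes you outside the positive-denominator convention of Lescop's formula.
\item The paper treats this case separately and obtains $\lambda(N_{p/q})-\lambda(N_{-p/q'})=(q+q')\,\hat a_1(L)$. This is nonzero, including when $q=q'$.
\end{itemize}
Your normalization does become legitimate, but only as a consequence of the $\sign(p)$ prefactor. The display above is equivalent to $\lambda(N_{p/q})=p\,\hat a_1(K_1\cup K_2)+q\,\hat a_1(L)$ for $p>0$ and arbitrary $q\in\mathbb{Z}$. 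Combined with the value of $\lambda(M_{00})$, this extends to the meridian as $(p,q)=(1,0)$. That settles the comparison with $M_{00}$ you deferred; it is the paper's $\lambda(N_{\pm1/q})-\lambda(M_{00})=\pm q\,\hat a_1(L)$.

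Your hedge ``up to a sign depending on $\sign(p)$ or $|p|$ normalization'' does not exclude, say, $|p|\,\hat a_1(K_1\cup K_2)+q\,\hat a_1(L)$ with $q>0$. Under such a formula the slopes $p/q$ and $-p/q$ would always have equal $\lambda$, and the argument would fail. So the proof is complete only once you carry out the expansion, including the sign, and check the value at $1/0$.
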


Theorems~\ref{thm3}--\ref{thm5} may be regarded as generalizations of \cite[Theorem 4]{IchiharaJong2025} to certain $3$-manifolds with the first Betti number one or two. 
For example, in Theorem~\ref{thm3}, when $K_1$ is the unknot, the manifold $M$ is $S^2 \times S^1$. 
In the last section, we give some examples. 
We note that the result above is, in a sense, best possible, because $\hat{a}_1(L')=0$ holds for 
$L' \subset L$ with $\sharp L' \geq 4$ when $L$ is algebraically split \cite[Lemma 1.5]{Hoste}.

\begin{remark}
Here we make a remark about the coefficients of the Conway polynomial, 
specifically the difference between the usual ones and those defined \`{a} la Lescop \cite{Lescop96}. 
The (usual) skein relation of the Conway polynomial $\nabla_L(z)$ of a link $L$ in $S^3$ is given by 
\begin{equation*}
\nabla_{L_+}(z) - \nabla_{L_-}(z) = z \nabla_{L_0}(z).
\end{equation*}
The $k$-th coefficient of $\nabla_L(z)$ is denoted by $a_k(L)$. 
In Lescop's book~\cite{Lescop96}, the notation for the Conway polynomial and its coefficients is slightly different from the above. 
In this paper, we denote by $\hat\nabla_L(z)$ the Conway polynomial used in \cite{Lescop96}, which satisfies the following skein relation:
\begin{equation*}
\hat\nabla_{L_+}(z) - \hat\nabla_{L_-}(z) = -z \hat\nabla_{L_0}(z).
\end{equation*}
If $L$ is a $\mu$-component link, then $\hat\nabla_L(z)$ has the form 
\begin{equation*}
\hat\nabla_{L}(z) = z^{\mu-1} \left(\hat a_0(L) + \hat a_1(L)z^2 + \cdots\right).
\end{equation*}
Note that $\nabla_{L^*}(z) = \hat\nabla_L(z)$, where $L^*$ denotes the mirror image of $L$. 
Also note that $\nabla_{L^*}(z) = (-1)^{\mu -1} \nabla_{L}(z)$.  
Therefore we have 
\begin{equation*}
\hat{a}_k(L) = (-1)^{\mu -1} a_{\mu + 2k -1}(L).
\end{equation*}
In particular, $\hat{a}_k(K) = a_{2k}(K)$ holds for a knot $K$.
\end{remark}

\section{Purely cosmetic surgeries on knots in rational homology spheres}\label{sec2}

In 1985, an integer-valued invariant for oriented integral homology spheres was defined by Casson, which is derived from representations of their fundamental groups into $SU(2)$ (see \cite{AM90}). 
In \cite{Walker92}, Walker extended it to oriented rational homology spheres. 
Then, in \cite{Lescop96}, Lescop gave a formula to calculate the invariant from framed link presentations, and showed that the invariant can be extended to all oriented closed 3-manifolds. 
We call this invariant the Casson--Walker--Lescop invariant, and denote it by $\lambda(M)$ for an oriented closed 3-manifold $M$.

In this section, we first review the surgery formula for the Casson-Walker-Lescop invariant and provide a lemma required for the proof of our results. 
We then proceed to the proofs of Theorem~\ref{thm1} and Corollaries~\ref{cor0}--\ref{cor2}.

\subsection{Surgery formula of $\lambda$}
Let $L = K_1 \cup \cdots \cup K_n$ be an oriented link in $S^3$, and let $M$ be the 3-manifold obtained by $(p_1/q_1 , \dots , p_n/q_n)$-surgery on $L$, where $p_1/q_1 , \dots , p_n/q_n$ are rational numbers. 
The set $( K_i , p_i/q_i )_{1 \le i \le n}$ is called a \emph{surgery presentation} of $M$, and we denote it by $\mathbb{L}$. 
Throughout this paper, we assume that the denominators of the surgery slopes $q_1, \dots, q_n$ are positive unless otherwise noted. 
Then, by \cite[Proposition 1.7.8]{Lescop96}, we have the following surgery formula of the Casson--Walker--Lescop invariant.
\begin{align}\label{eq:LescopFormula}
\begin{aligned}
\lambda(M) =& \ 
(-1)^{b_{-} (\L)} 
\left( \prod_{i=1}^n q_i \right)
\sum_{J \ne \emptyset, J \subset N} 
\det ( E(\mathbb{L}_{N - J} ; J ) )\; \hat{a}_1(L_J) \\
& +(-1)^{b_{-} (\L)} 
\left( \prod_{i=1}^n q_i \right)
\sum_{J \ne \emptyset, J \subset N} 
\frac{\det ( E(\mathbb{L}_{N - J} ) ) (-1)^{\sharp J} \theta(\mathbb{L}_J)}{24} \\
&
+
\left( \prod_{i=1}^n q_i \right)
|\det(E(\mathbb{L}))|
\left(
\frac{\sigma(E(\mathbb{L}))}{8}
+
\sum_{i=1}^{n} 
\frac{s(p_i, q_i) }{2}
\right).
\end{aligned}
\end{align}

Here the terms used in the above formula are defined as follows.
\begin{itemize}
    \item Set $N = \{ 1, \ldots , n \}$.
    \item $l_{ij} = 
    \begin{cases}
    \lk( K_i , K_j) & \text{if } i \ne j, \\
    p_i / q_i  & \text{if } i = j,
    \end{cases}$
    and $E(\mathbb{L}) = ( l_{ij} )_{1 \le i, j \le n}$. 
    Here $\lk(K_i, K_j)$ denotes the linking number of $K_i$ and $K_j$ in $S^3$. 
    Recall that $\lk(K_i, K_j) = \lk(K_j, K_i)$. 
    \item $b_{-} (\L)$ and $\sigma(E(\mathbb{L}))$ denote the number of negative eigenvalues and the signature of $E(\mathbb{L})$, respectively.
    \item
    Set 
    $L_I = \cup_{i \in I} K_i$ for $I \subset N$ 
    and 
    $\mathbb{L}_I =( K_i , p_i/q_i )_{i \in I}$ for $I \subset N$.
    \item Set 
    $ E(\mathbb{L}_{N - J} ; J ) = ( l_{ijJ} )_{i,j \in N - J}$ for $l_{ijJ} = 
    \begin{cases}
        l_{ij} & \text{if } i \ne j, \\
        p_i/q_i + \sum_{k \in J} l_{ki} & \text{if } i = j. 
    \end{cases}$
    \item $s(p,q)$ denotes the Dedekind sum for a pair $(p,q)$ of coprime integers ($q \ne 0$), i.e.,
    \[
    s(p,q) = \sum_{i=1}^{|q|} 
    \left( \left( \frac{i}{q} \right) \right)
    \left( \left( \frac{ip}{q} \right) \right)
    \]
    with
    \[
    \left( \left( x \right) \right)
    = \begin{cases}
        0 & \text{if } x \in \mathbb{Z}, \\
        x - \lfloor x \rfloor - \frac{1}{2} & \text{if } x \not\in \mathbb{Z}.
      \end{cases}
    \]
    By definition, we have 
    \begin{equation}\label{eq:DedekindSum1}
        s(1,q) = \dfrac{(q-1)(q-2)}{12q}
        \quad \text{and} \quad
        s(-1,q) = -\dfrac{(q-1)(q-2)}{12q}. 
    \end{equation}
\end{itemize}
We do not include the exact calculation of $\theta(\L_J)$ in general, since we only need to calculate $\theta(\L_J)$ for the case where there exists a component $K_i$ such that $\lk(K_i,K_j)=0$ for $j \ne i$. 
In this case, we have 
\[
\theta(\mathbb{L}_J) =
\begin{cases}
    \frac{ p_j^2 + q_j^2 + 1 }{ q_j^2 } & \text{if } J = \{ j \}, \\
    0   &   \text{if } \sharp J \ge 2. 
\end{cases}
\]
See \cite{Lescop96} for details.

We remark that the determinant of an empty matrix is set to be one (see \cite{Lescop96}). 
In particular, if $J = N$, it follows that $\det(E(\mathbb{L}_{N - J}))= \det ( E(\mathbb{L}_{N - J} ; J ) ) =1$.  

Now, we let $N=\{ 1,2, \ldots ,n \}$, 
\[
A_N= (l_{ij}) = 
\begin{pmatrix}
 p_1/q_1 & \lk(K_1,K_2) & \cdots  & \lk(K_1, K_n)\\
  \lk(K_2,K_1) & p_2/q_2 & \cdots  & \lk(K_2,K_n) \\
  \vdots & \vdots & \ddots & \vdots \\
  \lk(K_n,K_1) & \lk(K_n,K_2)& \cdots  & p_n/q_n    
\end{pmatrix}, 
\]
$A_{N-J}=(l_{ij})_{i,j \in N-J}$, and 
$B_{N-J}=(b_{ij})_{i,j \in N-J}$ with 
\[
b_{ij}=
\begin{cases}
 l_{ij} & \text{ if } i \ne j, \\
  l_{ii}+\sum_{k \in J} l_{ki} &  \text{ if }  i=j. 
\end{cases}
\]
Then the following lemma, which calculates the Casson-Walker-Lescop invariant, plays a fundamental role in the proofs of our results.

\begin{lemma}\label{lem21}
Let $L=K_0 \cup K_1 \cup \dots \cup K_n$ be an $(n+1)$-component link in $S^3$ satisfying $\lk(K_0,K_j)=0$ for $j \ne 0$. 
Let $M$ be the closed orientable $3$-manifold obtained by $(p_0/q_0,p_1/q_1,\dots,p_{n}/q_{n})$-surgery on $L$ from $S^3$.
Then we have the following. 
\begin{align*}
\lambda(M)
=& \ (-1)^{b_{-} (\L)} \prod_{i=0}^{n} q_i 
\Biggl( \sum_{J \subset N, J \ne \emptyset} \frac{p_0}{q_0} \det B_{N-J} \hat{a}_1(L_{J}) \\
& \qquad  \qquad  \qquad  \qquad  \qquad +\sum_{J' \subset N, J=J' \cup \{ 0\}} \det B_{N-J'} \hat{a}_1(L_{J}) \Biggr)\\
&+ \frac{(-1)^{b_{-} (\L)} \prod_{i=0}^{n}q_i}{24} 
\Biggl( \sum_{J \subset N, J \ne \emptyset}\frac{p_0}{q_0} \det A_{N-J}(-1)^{\sharp J} \theta(\L_{J})\\
&  \qquad  \qquad  \qquad 
-\det A_{N} \left( \frac{p_0^2+q_0^2+1}{q_0^2} \right) - \sum_{i=1}^{n} \frac{p_0}{q_0} \det A_{N-\{i\}} \left( \frac{p_i^2 +q_i^2+1}{q_i^2} \right) \Biggr)\\   
&+ 
\prod_{i=1}^{n} q_i \left| \det A_{N} \right| | p_0 | 
\left( \frac{\sigma(E(\L))}{8}
+\frac{\sum_{i=0}^{n} s(p_i, q_i)}{2} \right).
\end{align*}
\end{lemma}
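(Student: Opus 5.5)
The plan is to specialize Lescop's formula \eqref{eq:LescopFormula} to the index set $\{0\}\cup N$, with $K_0$ playing a distinguished role. The hypothesis $\lk(K_0,K_j)=0$ for $j\ne 0$ gives a block-diagonal matrix
\[
E(\L)=\begin{pmatrix} p_0/q_0 & 0\\ 0 & A_N\end{pmatrix}.
\]
Hence every determinant attached to a subset that contains $0$ factors off a $p_0/q_0$. We then treat the three lines of \eqref{eq:LescopFormula} separately.

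For the first line, split the nonempty subsets $J\subset\{0\}\cup N$ into two kinds: those with $0\notin J$, where $J\subset N$ is nonempty, and those with $J=J'\cup\{0\}$, where $J'\subset N$ is arbitrary. If $0\notin J$, the complement $(\{0\}\cup N)-J$ contains $0$. The diagonal entry of $E(\L_{\cdot};J)$ at $0$ is $p_0/q_0+\sum_{k\in J}\lk(K_k,K_0)=p_0/q_0$, and the off-diagonal entries in row and column $0$ vanish. The remaining block is exactly $B_{N-J}$, since the linking numbers from $J$ into $N-J$ are unchanged. So the determinant is $\frac{p_0}{q_0}\det B_{N-J}$. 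If $J=J'\cup\{0\}$, the complement is $N-J'$. Adding $k=0$ to the diagonal sums contributes $\lk(K_0,K_i)=0$, so the matrix is $B_{N-J'}$ with $B$ formed relative to $J'$. This case includes $J'=N$, where the empty determinant equals $1$. Together with the prefactor $(-1)^{b_-}\prod_{i=0}^n q_i$, this gives the first bracket.

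For the $\theta$ line, the same split applies. If $0\notin J$, then $\det E(\L_{\{0\}\cup N-J})=\frac{p_0}{q_0}\det A_{N-J}$, and $\theta(\L_J)$ is left as is. If $0\in J$, the component $K_0$ has zero linking number with every other component of $L_J$, so the stated evaluation of $\theta$ applies. It vanishes unless $J=\{0\}$, in which case $\theta=(p_0^2+q_0^2+1)/q_0^2$ and $\det E(\L_N)=\det A_N$, with sign $(-1)^{1}$. The terms with $0\notin J$ and $\sharp J=1$, say $J=\{i\}$, can also be written explicitly. We do not know $\lk(K_i,\cdot)$ in general, so $\theta(\L_{\{i\}})$ for a single component is the knot value $(p_i^2+q_i^2+1)/q_i^2$, since $\theta$ of a one-component presentation depends only on its slope. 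That gives the subtracted sum. The larger $J\subset N$ remain in the general sum. \emph{Caution:} the displayed statement seems to keep all $J\subset N$ in the general sum and also subtract the singletons. The main bookkeeping obstacle is to check which convention (summing over $\sharp J\ge 2$, or not) reproduces the stated formula, and I would reconcile it against Lescop's definition of $\theta$ for one-component presentations.

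For the last line, use $|\det E(\L)|=|p_0/q_0|\,|\det A_N|$. Since $q_0>0$, the factor $q_0$ from $\prod_{i=0}^n q_i$ cancels the denominator, leaving $\prod_{i=1}^n q_i\,|\det A_N|\,|p_0|$. The signature and Dedekind-sum terms are copied unchanged. Adding the three pieces gives the lemma. Apart from the $\theta$ bookkeeping noted above, the only care needed is tracking the sign $(-1)^{\sharp J}$ and the empty-determinant convention.
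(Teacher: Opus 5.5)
Your approach is the same as the paper's. Its proof writes $E(\L)$ in block-diagonal form and asserts the three resulting identities. Your handling of the $\hat a_1$-line (including $J'=\emptyset$ and $J'=N$) and of the last line is correct.

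The caution you raise, however, is not a convention to be reconciled, and you should not leave it open. It is a genuine error in the displayed formula, and the paper's proof repeats it without justification. Your own bookkeeping is the right one:
\begin{itemize}
\item In the $\theta$-line, each nonempty $J\subset\{0\}\cup N$ with $0\notin J$ contributes $\frac{p_0}{q_0}\det A_{N-J}(-1)^{\sharp J}\theta(\L_J)$.
\item For $J=\{i\}$ this term already \emph{equals} $-\frac{p_0}{q_0}\det A_{N-\{i\}}\frac{p_i^2+q_i^2+1}{q_i^2}$.
\item The sets containing $0$ contribute only $-\det A_N\frac{p_0^2+q_0^2+1}{q_0^2}$, since $\theta(\L_J)=0$ when $0\in J$ and $\sharp J\ge 2$.
\end{itemize}
The display sums over \emph{all} nonempty $J\subset N$ and then subtracts the singleton terms a second time, so they are counted twice. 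Re-examining Lescop's one-component $\theta$ cannot rescue this. The paper itself uses $\theta(\L_{\{i\}})=(p_i^2+q_i^2+1)/q_i^2$, for example in the Boyer--Lines check in the appendix.

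To see that the statement as written is false, take $n=1$, let $L$ be the two-component unlink, and use slopes $(1/q,0)$. The result is $S^2\times S^1$. Both \eqref{eq:LescopFormula} and \eqref{eq3+} give $\lambda=-\frac{1}{12}$, whereas the displayed formula gives $\frac{q}{24}\bigl(-\frac{2}{q}-\frac{2}{q}\bigr)=-\frac{1}{6}$. More generally, for $n=1$ the display contains the term $-\frac{p_0}{q_0}\cdot\frac{p_1^2+q_1^2+1}{q_1^2}$ twice, while \eqref{eq3+} with $l=0$ contains it once.

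So your argument proves, completely, the corrected lemma: restrict the first $\theta$-sum to $\sharp J\ge 2$, or equivalently delete the final subtracted sum. You should state this explicitly rather than hedge.

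The discrepancy also matters downstream. The doubled sum is exactly what becomes the coefficient $c_1$ in the proof of Theorem~\ref{thm22}. With the corrected identity, the $\theta$-terms indexed by $J\subset N$ cancel between $\lambda(M_{\pm})$ and $\lambda(M)$, so $c_1=0$.
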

\begin{proof}
The linking matrix of $\L$ is expressed as 
\[
E(\L) =\left(
\begin{array}{ccccc}
p_0/q_0 &0&0&\cdots &0\\
0 &  p_1/q_1 & \lk(K_1,K_2) & \cdots  & \lk(K_1, K_n)\\
  0&\lk(K_2,K_1) & p_2/q_2 & \cdots  & \lk(K_2,K_n) \\
  \vdots&\vdots & \vdots & \ddots & \vdots \\
  0&\lk(K_n,K_1) & \lk(K_n,K_2)& \cdots  & p_n/q_n
\end{array} 
\right).\]

From this, together with definitions of $A_N$, $A_{N-J}$ and $B_{N - J}$, we have the following. 
\begin{align*}
&\sum_{J \subset N \cup \{ 0 \} , J \ne \emptyset } 
\det ( E(\mathbb{L}_{N - J} ; J ) )\; \hat{a}_1(L_J) \\
& \qquad =
\sum_{J \subset N, J \ne \emptyset}
\frac{p_0}{q_0} 
\det B_{N-J} \hat{a}_1(L_{J}) 
+\sum_{J' \subset N, J=J' \cup \{ 0\}}
\det B_{N-J'} \hat{a}_1(L_{J}) ,
\end{align*}
\begin{align*}
&\prod_{i=0}^n q_i 
\sum_{J \ne \emptyset, J \subset N \cup \{ 0 \} } 
\frac{\det ( E(\mathbb{L}_{N - J} ) ) (-1)^{\sharp J} \theta(\L_J)}{24} \\
&=
\frac{\prod_{i=0}^{n}q_i}{24} 
\Biggl( 
\sum_{J \subset N, J \ne \emptyset}
\frac{p_0}{q_0} \det A_{N-J}(-1)^{\sharp J} \theta(\L_{J})\\
&  \qquad  \qquad  \qquad 
- \det A_{N} \left( \frac{p_0^2+q_0^2+1}{q_0^2} \right)
- \sum_{i=1}^{n} \frac{p_0}{q_0} \det A_{N-\{i\}} \left( \frac{p_i^2 +q_i^2+1}{q_i^2} \right) \Biggr) ,
\end{align*}
and 
\[
\left( \prod_{i=0}^n q_i \right)
|\det(E(\mathbb{L}))|
= \prod_{i=1}^{n} q_i \left| \det A_{N} \right| | p_0 | . 
\]

Consequently, from Lescop's surgery formula~\eqref{eq:LescopFormula}, we have the result. 
\end{proof}

\subsection{Proof of Theorem~\ref{thm1}}
It is well-known that any closed orientable 3-manifold, in particular any rational homology sphere, can be obtained by Dehn surgery on a link in $S^3$~\cite{Lickorish, Wallace}. 
We note that in a $3$-manifold $M$ obtained by Dehn surgery on a link in $S^3$, any null-homologous knot can be isotoped in $M$ so that it bounds a Seifert surface in $M$ disjoint from the dual link; see \cite[Proof of Theorem 2]{IchiharaJong2025}, for example. 
Also note that, for homological reasons, integral purely cosmetic surgeries on a null-homologous knot must be along the slopes $\pm p$ for some positive integer $p$. 
In view of these facts, Theorem~\ref{thm1} follows from the next theorem.

\begin{theorem}\label{thm21}
Let $L = K_0 \cup K_1 \cup \cdots \cup K_n$ be an $(n+1)$-component link in $S^3$ with $n \ge 1$ and $\lk(K_0,K_j)=0$ for $j \ne 0$. 
Suppose that $(p_1/q_1, \dots, p_n/q_n)$-surgery on $K_1 \cup \cdots \cup K_n$ yields a rational homology sphere. 
Then there are at most two positive integers $p$ such that the 3-manifolds obtained by Dehn surgeries on $L$ along slopes $(p/1, p_1/q_1, \dots, p_n/q_n)$ and $(-p/1, p_1/q_1, \ldots , p_n/q_n)$ are orientation-preservingly homeomorphic.
\end{theorem}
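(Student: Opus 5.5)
We apply Lemma~\ref{lem21} to the two surgery presentations, with $p_0/q_0 = p/1$ and $p_0/q_0=-p/1$, and compare the resulting values of $\lambda$. If the two surgered manifolds $M_p$ and $M_{-p}$ are orientation-preservingly homeomorphic, then $\lambda(M_p)=\lambda(M_{-p})$. The plan is to show that $\lambda(M_p)-\lambda(M_{-p})$ is a polynomial in $p$ of degree at most two, with no constant term, that is not identically zero. It then has at most two positive roots: after factoring out $p$ we are left with a nonzero polynomial of degree at most one, hence at most one positive root. This may in fact give the sharper count, but "at most two" certainly follows.

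Step one is bookkeeping of the sign and signature terms. Since $(p_1/q_1,\dots,p_n/q_n)$-surgery gives a rational homology sphere, $\det A_N\neq 0$. Then $E(\L)=\mathrm{diag}(\pm p)\oplus A_N$, so $b_-(\L)$ changes by exactly one between $+p$ and $-p$, and $(-1)^{b_-(\L)}$ flips sign. The signature satisfies $\sigma(E(\L))=\pm1+\sigma(A_N)$. Also $s(\pm p,1)=0$, and the remaining $s(p_i,q_i)$ are unchanged. With $q_0=1$, write $\epsilon=(-1)^{b_-}$ for the $+p$ presentation and $Q=\prod_{i\ge1}q_i$. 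The first two groups of terms in Lemma~\ref{lem21} then have the shape $\epsilon Q(p\,X + Y)$ for $+p$ and $-\epsilon Q(-p\,X+Y)$ for $-p$. Here $X$ and $Y$ are independent of $p$, except that the $\theta$-term for $J=\{0\}$ contributes $-\det A_N(p^2+2)/24$. The last group is $Q|\det A_N|\,p\,(\pm 1/8 + C)$ with $C$ independent of the sign.

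Step two is to subtract. Terms that are even in $p$ and multiplied by $\epsilon$ in one formula and $-\epsilon$ in the other add rather than cancel. Terms odd in $p$ cancel in the first two groups. So
\[
\lambda(M_p)-\lambda(M_{-p}) = 2\epsilon Q\Bigl(Y' - \tfrac{\det A_N}{24}(p^2+2)\Bigr) + \tfrac{1}{4}Q|\det A_N|\,p,
\]
where $Y'$ is the $p$-independent part coming from the $J\ni 0$ sums. These are the $\det B_{N-J'}\hat a_1(L_{J'\cup\{0\}})$ terms, including $\hat a_1(K_0)$ for $J'=\emptyset$, and they do not involve $p_0$.

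The main obstacle is this sign/parity bookkeeping. In particular, I must check carefully that the $J\ni0$ terms enter without a factor $p_0$. The $J=\emptyset$-type term has $\det B_{N}=\det A_N$, since the linking numbers with $K_0$ vanish. I also need to confirm that there is no hidden $p$-dependence in the $\theta(\L_J)$ for $J\ni 0$, $\sharp J\ge2$; these are zero by the stated rule, because $K_0$ has zero linking with all other components. Once this is verified, the coefficient of $p^2$ is $-\epsilon Q\det A_N/12\neq0$. The difference is therefore a genuinely quadratic polynomial in $p$, and a nonzero quadratic has at most two real roots, hence at most two positive integers $p$ with $\lambda(M_p)=\lambda(M_{-p})$. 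This proves the theorem.
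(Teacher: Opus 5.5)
Your argument is correct and is essentially the paper's proof: you apply Lemma~\ref{lem21} with $p_0/q_0=\pm p/1$, the terms odd in $p$ cancel, and the even ones (the $J\ni 0$ terms and the $\theta$-term $-\det A_N(p^2+2)/24$) double. Together with the $Q|\det A_N|\,p/4$ from the signature change, the nonzero $p^2$-coefficient $-\epsilon Q\det A_N/12$ makes the difference a nonzero quadratic.

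You should, however, delete your opening claim that the difference has no constant term and hence at most one positive root. Your own formula has constant term $2\epsilon Q(Y'-\det A_N/12)$. For $K_0\cup K_1$ a split unlink with slope $1$ on $K_1$, the difference is $-(p-1)(p-2)/12$, which vanishes at $p=1,2$ because $S^3$ and $\mathbb{RP}^3$ are amphichiral. So ``at most two'' is sharp.
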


\begin{proof}
Set $\lk(K_i, K_j)=l_{ij}$ for $i \ne j$. 
Let $M$ (resp.~$M'$) be the 3-manifold obtained by $( p/1, p_1/q_1, \dots , p_n/q_n) $-surgery (resp.~$( -p/1, p_1/q_1, \dots , p_n/q_n) $-surgery) on $L$ from $S^3$.

Let $\L$ (resp. $\L'$) be the surgery presentation given by $L$ with $( p/1, p_1/q_1, \dots , p_n/q_n)$ (resp. with $( -p/1, p_1/q_1, \dots , p_n/q_n)$). 
Then, we have
\[
(-1)^{b_{-} (\L)} = - (-1)^{b_{-} (\L')} 
\text{ \ and \ } 
\sigma(E(\L)) -2 = \sigma (E(\L') ).
\] 

Together with this, by Lemma~\ref{lem21}, we have the following. 
\begin{align*}
\lambda(M)-\lambda(M')
=& \ 2(-1)^{b_{-} (\L)} \prod_{i=1}^{n}q_i \sum_{J' \subset N, J=J' \cup \{ 0\}}\det B_{N-J'} \hat{a}_1(L_{J})\\
&-\frac{(-1)^{b_{-} (\L)}\prod_{i=1}^{n}q_i}{12} \det A_{N} (p^2+2) +\frac{\prod_{i=1}^{n}q_i | \det A_{N} | }{4}p.
\end{align*}

By the assumption that the 3-manifold obtained by $(p_1/q_1, \dots, p_n/q_n)$-surgery on $K_1 \cup \cdots \cup K_n$ is a rational homology sphere, we see that $\det A_{N} \ne 0$. 
This implies that the right-hand side of the above equality is not identically zero, and hence it is a quadratic polynomial in $p$. 

Therefore, there are at most two positive integers $p$ satisfying that $M$ and $M'$ are orientation-preservingly homeomorphic.
\end{proof}

\subsection{Knot complement problem in a rational homology sphere} 

The cosmetic surgery problem and the knot complement problem are closely related by the following proposition. 
While this fact is part of folklore, we include a proof. 

\begin{proposition}\label{prop:ck}
    Let $K$ be a knot in a closed orientable $3$-manifold $M$. 
    Suppose there are at most $k$ distinct non-meridional slopes $r_1, \dots, r_k$ such that each $r_i$-surgery on $K$ yields a manifold orientation-preservingly homeomorphic to $M$. 
    Then, there exist at most $k$ distinct knots in $M$ that are inequivalent to $K$ but whose exteriors are orientation-preservingly homeomorphic to the exterior of $K$.
    \end{proposition}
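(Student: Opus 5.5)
The plan is to build an injective map from the set of equivalence classes of knots that are inequivalent to $K$ but have exteriors homeomorphic to that of $K$, into the set $\{r_1,\dots,r_k\}$ of non-meridional slopes on $\partial N(K)$. Let $E = M - \mathrm{int}\,N(K)$ and let $\mu$ be the meridian of $K$ on $\partial E$. Suppose $K'$ is a knot in $M$ that is inequivalent to $K$, and let $h\colon E' = M - \mathrm{int}\,N(K') \to E$ be an orientation-preserving homeomorphism. Let $\mu'$ be the meridian of $K'$, and set $r = h(\mu')$, which is a slope on $\partial E$. Filling $E$ along $r$ gives $E \cup_r (D^2\times S^1)$. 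The map $h$ extends over the solid tori to an orientation-preserving homeomorphism from $M = E' \cup_{\mu'} (D^2\times S^1)$ to this filling. So $r$-surgery on $K$ yields a manifold orientation-preservingly homeomorphic to $M$.

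Next I will show that $r \neq \mu$. If $r = \mu$, then $h$ takes meridian to meridian, so it extends to an orientation-preserving homeomorphism $M \to M$. This extension carries the core $K'$ of the solid torus to the core $K$, so $K'$ would be equivalent to $K$, a contradiction. Hence $r$ is a non-meridional slope realizing a surgery homeomorphic to $M$, and by hypothesis $r \in \{r_1,\dots,r_k\}$.

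The key step, and the main subtlety, is that the assignment $K' \mapsto r$ depends on the choice of $h$, so it is not a well-defined map. I will avoid this by arguing through the slopes themselves. Suppose $K'$ and $K''$ are both inequivalent to $K$, with homeomorphisms $h'\colon E'\to E$ and $h''\colon E''\to E$ that produce the same slope $r$. Then
\[
(h'')^{-1}\circ h'\colon E' \to E''
\]
sends $\mu'$ to $\mu''$, so it extends to an orientation-preserving self-homeomorphism of $M$ taking $K'$ to $K''$. Thus $K'$ and $K''$ are equivalent. Equivalently, each slope $r_i$ is hit by at most one equivalence class of such knots.

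To finish, choose one homeomorphism for each class, which gives a map from the classes to $\{r_1,\dots,r_k\}$. By the previous paragraph this map is injective. Therefore there are at most $k$ inequivalent knots with exteriors orientation-preservingly homeomorphic to that of $K$. Throughout, the only care needed is to track orientations, so that every extension over the glued solid tori is orientation-preserving; this holds because $h$ is orientation-preserving and a homeomorphism between boundary tori that matches meridians extends over the solid tori.
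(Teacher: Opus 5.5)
Your proof is correct and is essentially the paper's argument. The paper takes the dual knots of the $r_i$-surgeries as fixed representatives and shows that any such $K'$ is equivalent to one of them; this is your $(h'')^{-1}\circ h'$ extension argument, which you phrase as injectivity of a map from equivalence classes to $\{r_1,\dots,r_k\}$. You also justify why $r$ is non-meridional, a point the paper asserts without comment.
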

\begin{proof}
Suppose there are at most $k$ distinct non-meridional slopes $r_1, \dots, r_k$ such that each $r_i$-surgery on $K$ yields a manifold orientation-preservingly homeomorphic to $M$. 
For each $i = 1, \dots, k$, let $K_i$ be the core of the attached solid torus (the dual knot) of the $r_i$-surgery on $K$. 
Then, for each $i$, the exteriors of $K_i$ and $K$ are orientation-preservingly homeomorphic. 
Also each $K_i$ is a knot in the manifold obtained by $r_i$-surgery, which is orientation-preservingly homeomorphic to $M$, by the assumption. 
Suppose that there exists another knot $K'$ in $M$ such that there exists an orientation preserving homeomorphism $h$ from the exterior of $K'$ to that of $K$ but $K'$ and $K$ are inequivalent knots in $M$. 
Then, the slope $r$ on $\partial N(K)$ determined by $h(\mu_{K'})$ for the meridian $\mu_{K'}$ of $K'$ satisfies that $r$ is non-meridional and $r$-surgery on $K$ yields a manifold orientation-preservingly homeomorphic to $M$. 
However, by the assumption, such slopes for $K$ are only $r_1, \dots, r_k$, and so $r=r_i$ for some $i$. 
It implies that $K'$ is equivalent to one of the knots $K_i$. 
Consequently there exist at most $k$ distinct knots in $M$ that are inequivalent to $K$ but whose exteriors are orientation-preservingly homeomorphic to the exterior of $K$.
\end{proof}

We use Lemma~\ref{lem21} to consider the knot complement problem, and obtain the following. 

\begin{theorem}\label{thm22}
Let $L=K_0 \cup K_1 \cup \cdots \cup K_n$ be an $(n+1)$-component link in $S^3$ with $n \ge 1$ satisfying 
$\lk(K_0,K_j)=0$ for $j \ne 0$. 
For given rational numbers $p_1/q_1, \dots, p_n/q_n$, suppose that at least one of the following two conditions holds:
\[
\sum_{i=1}^{n} \det A_{N-\{ i \}}
\left( \frac{p_i^2 + q_i^2 + 1}{q_i^2} \right) \ne 0, \qquad 
(-1)^{b_{-}(\L')} \det A_{N} - | \det A_{N} | \ne 0 .
\]
Here $\L'$ denotes the surgery presentation given by $L' = L - K_0$ with slopes $( p_1/q_1, \dots , p_n/q_n)$. 
Then, there are at most four positive integers $q$ satisfying that the $3$-manifold obtained by $(1/q, p_1/q_1, \dots, p_n/q_n)$- or $(-1/q, p_1/q_1, \dots, p_n/q_n)$-surgery on $L$ is orientation-preservingly homeomorphic to that obtained by $(p_1/q_1,\dots, p_n/q_n)$-surgery on $L'$. 
\end{theorem}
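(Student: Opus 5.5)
Our plan is to compare Casson--Walker--Lescop invariants. Let $M_0$ be the manifold obtained by $(p_1/q_1,\dots,p_n/q_n)$-surgery on $L'$; it is also $(1/0,p_1/q_1,\dots,p_n/q_n)$-surgery on $L$. Let $M_{\pm q}$ be the manifold obtained by $(\pm 1/q, p_1/q_1,\dots,p_n/q_n)$-surgery on $L$. We apply Lemma~\ref{lem21} with $p_0/q_0=\pm 1/q$, i.e.\ $p_0=\pm1$ and $q_0=q>0$. We also compute $\lambda(M_0)$ directly from Lescop's formula~\eqref{eq:LescopFormula} for the presentation $\L'$. We then show that $\lambda(M_{q})-\lambda(M_0)$ and $\lambda(M_{-q})-\lambda(M_0)$, viewed as functions of $q$, are nonzero polynomials of degree at most two. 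This gives at most two roots for each sign, hence at most four values of $q$ in total.

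\textbf{Expanding Lemma~\ref{lem21}.} Since $E(\L)$ is block diagonal with blocks $\pm 1/q$ and $A_N$, we have $b_-(\L)=b_-(\L')+\epsilon$, where $\epsilon=0$ for $+$ and $\epsilon=1$ for $-$. Also $\sigma(E(\L))=\sigma(A_N)\pm1$ and $s(\pm1,q)=\pm(q-1)(q-2)/(12q)$. Multiplying through by the prefactor $q_0=q$, each term becomes a polynomial in $q$:
\begin{itemize}
\item The terms carrying $p_0/q_0$ lose the factor $q$. They reproduce exactly $\pm(-1)^{b_-(\L')}$ times the corresponding contributions to $\lambda(M_0)$, and these are independent of $q$.
\item The terms $\det B_{N-J'}\hat a_1(L_{J'\cup\{0\}})$ are linear in $q$.
\item The term $\det A_N (p_0^2+q_0^2+1)/q_0^2$ contributes $\det A_N(q^2+2)/q$.
\item The term with $\theta(\L_{\{i\}})$ and $\det A_{N-\{i\}}$ carries $p_0/q_0$, so it is constant after multiplying by $q$.
\item The last line is $\prod q_i|\det A_N|\bigl(\sigma/8\pm(q-1)(q-2)/(24q)\bigr)$.
\end{itemize}
We then write $\lambda(M_{\pm q})-\lambda(M_0)$ explicitly as $\alpha_\pm q+\beta_\pm+\gamma_\pm/q$. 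Multiplying by $q$ gives a polynomial of degree at most two.

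\textbf{Main obstacle: showing the polynomial is nonzero.} Note first that $\lambda(M_0)$ itself contains, from its $\theta$-terms, the quantity $-\sum_i \det A_{N-\{i\}}(p_i^2+q_i^2+1)/q_i^2$ times a sign and $\prod q_i/24$.

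For the coefficient of $q^2$, collect the $\det A_N q^2/q$ term and the $\pm|\det A_N|q^2/(24q)$ term. After multiplying by $q$, the $q^2$ coefficient is proportional to
\[
\pm\Bigl(-(-1)^{b_-(\L')}\det A_N + |\det A_N|\Bigr)\prod q_i/24,
\]
where the overall sign depends on $\epsilon$. The second hypothesis says exactly that this is nonzero.

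When that coefficient vanishes, we turn to the constant term in $q$, i.e.\ the coefficient of $1/q$ before multiplying. It collects the contribution $2\det A_N$ from $(q^2+2)/q$ and the contribution $\pm|\det A_N|\cdot 2/24$ from the Dedekind sum. Under $(-1)^{b_-}\det A_N=|\det A_N|$, these should combine to a multiple of $\det A_N$ that vanishes or not.

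It may be that the cleaner witness is the term linear in $q$, or the $q$-independent term. There the first hypothesis enters: the $\theta$-sum with $\det A_{N-\{i\}}$ appears in $\lambda(M_{\pm q})$ with factor $\pm(-1)^\epsilon$ relative to its appearance in $\lambda(M_0)$, so it fails to cancel. We will track the signs carefully, and choose whichever coefficient isolates $\sum_i\det A_{N-\{i\}}(p_i^2+q_i^2+1)/q_i^2$.

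This coefficient bookkeeping is where the real work lies. Once we have a nonzero coefficient in each sign case, the polynomial has at most two positive roots. The invariant $\lambda$ is preserved under orientation-preserving homeomorphism, so each sign contributes at most two values of $q$, giving at most four in total.
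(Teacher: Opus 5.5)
Your plan follows the paper's route: Lemma~\ref{lem21} with $p_0=\pm1$, $q_0=q$, compared with Lescop's formula for $\L'$, then a root count for a quadratic in $q$. But the step you call the main obstacle cannot be completed, and both of your tentative witnesses fail.

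\textbf{Second hypothesis.} The $q^2$ coefficient is not governed by it. That coefficient also contains $(-1)^{b_-(\L')}\prod_i q_i\sum_{J'\subset N}\det B_{N-J'}\hat a_1(L_{J'\cup\{0\}})$, which comes from the terms you yourself note are linear in $q$. The quantity $X=(-1)^{b_-(\L')}\det A_N-|\det A_N|$ is isolated only in the constant coefficient (the paper's $c_0$). Worse, $X\equiv 0$. Since $A_N$ is real symmetric, $\det A_N\neq0$ forces $\sign\det A_N=(-1)^{b_-(A_N)}$. So the second hypothesis never holds. The $\theta(\L_{\{0\}})$-term and the signature/Dedekind-sum term always combine to $\mp\frac{\prod_i q_i}{24}X\frac{q^2+2}{q}=0$.

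\textbf{First hypothesis.} Your relative factor $\pm(-1)^{\epsilon}$ is $(+1)(+1)$ for the $+$ sign and $(-1)(-1)$ for the $-$ sign, so it is $+1$ in both cases. Hence every $p_0/q_0$-term, including $\det A_{N-\{i\}}\theta(\L_{\{i\}})$, cancels exactly against $\lambda(M_0)$. Carried out correctly, your bookkeeping gives
\[
\lambda(M_{\pm q})-\lambda(M_0)=\pm(-1)^{b_-(\L')}\,q\prod_{i=1}^n q_i\sum_{J'\subset N}\det B_{N-J'}\,\hat a_1(L_{J'\cup\{0\}}),
\]
in which neither hypothesis appears.

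\textbf{The statement is false as written.} No choice of coefficient can detect the first hypothesis. Take $n=1$ and $K_0$ an unknot split from $K_1$. The first hypothesis holds, since $\det A_{\emptyset}=1$ and $(p_1^2+q_1^2+1)/q_1^2>0$. Yet $(\pm1/q,p_1/q_1)$-surgery on $L$ is the connected sum of $S^3$ (the $\pm1/q$-surgery on the unknot) with the $p_1/q_1$-surgery on $K_1$, for every $q$.

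\textbf{Source of the paper's $c_1$.} The paper's proof gets a nonzero $c_1$ only because the $\theta$-bracket in Lemma~\ref{lem21} lists the $J=\{i\}$ terms twice: once inside $\sum_{J\subset N,\,J\neq\emptyset}$ and again in the separate sum over $i$. Your ``fails to cancel'' reproduces exactly that discrepancy. With the correct count, the result agrees with the paper's own two-component computation in the proof of Theorem~\ref{thm3}. There, for $p_1/q_1=0$, one gets $\lambda(N)-\lambda(N')=\pm q_0\hat a_1(L)$, with none of the $-1/12$ that the displayed formula in the proof of Theorem~\ref{thm22} would add.

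What your method does prove is a weaker but correct statement: if $\sum_{J'\subset N}\det B_{N-J'}\hat a_1(L_{J'\cup\{0\}})\neq0$, then no positive integer $q$ works.
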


\begin{proof}
Let $M_{\pm}$ be the 3-manifolds obtained by $( \pm 1/q, p_1/q_1, \dots , p_n/q_n) $-surgeries on $L$ for a positive integer $q$, respectively. 
Let $M$ be the 3-manifold obtained by $(p_1/q_1, \dots , p_n/q_n) $-surgery on $L'$. 
Let $\L_{\pm}$ (resp. $\L'$) be the surgery description given by $L$ with $( \pm 1/q, p_1/q_1, \dots , p_n/q_n)$ (resp. by $L'$ with $( p_1/q_1, \dots , p_n/q_n)$). 
Then, since $\lk(K_0, K_j)=0$ for any $j \ne 0$, we have
\[
(-1)^{b_{-} (\L_{+})} 
= - (-1)^{b_{-} (\L_{-})} 
= (-1)^{b_{-} (\L')} , 
\]
and
\[
\sigma(E(\L_{+})) -1 = \sigma(E(\L_{-})) +1 = \sigma (E(\L')).
\] 

Then, in a similar way as before, we have the following by Lemma~\ref{lem21} and Equation~\eqref{eq:DedekindSum1}. 
\begin{align*}
\lambda(M_{+})-\lambda(M)
&=\left( 
(-1)^{b_{-}(\L')} \prod_{i=1}^{n}q_i 
\sum_{J' \subset N, J=J' \cup \{ 0\}} 
\det B_{N-J'} \hat{a}_1(L_{J}) \right) q \\
& \quad 
-\frac{(-1)^{b_{-}(\L')} \prod_{i=1}^{n}q_i}{24} \det A_{N} \left( \frac{q^2+2}{q} \right) \\
& \quad
-\frac{(-1)^{b_{-}(\L')} \prod_{i=1}^{n}q_i}{24} \sum_{i=1}^{n} \det A_{N-\{ i \}}
\left( \frac{p_i^2 + q_i^2+1}{q_i^2} \right) 
\\   
& \quad 
+ \frac{\prod_{i=1}^{n}q_i}{24} | \det A_{N} | \left( \frac{q^2+2}{q} \right). 
\end{align*}
If $\lambda(M_{+})-\lambda(M)=0$, then we obtain a quadratic equation in $q$ 
\[
c_2 q^2 + c_1 q + c_0 =0
\]
with
\begin{align*}
    c_2 &= 
    (-1)^{b_{-}(\L')} 
\sum_{J' \subset N, J=J' \cup \{ 0\}} 
\det B_{N-J'} \hat{a}_1(L_{J}) \\
& \quad - \frac{1}{24} 
\left( (-1)^{b_{-}(\L')} \det A_{N} - | \det A_{N} | \right) , \\
    c_1 &= -\frac{(-1)^{b_{-}(\L')} 
    }{24} 
\sum_{i=1}^{n} \det A_{N-\{ i \}} \left( \frac{p_i^2 + q_i^2+1}{q_i^2} \right) , \\
    c_0 &= -\frac{1}{12} 
    \left( (-1)^{b_{-}(\L')} \det A_{N} - | \det A_{N} | \right) .
\end{align*}
Thus, if at least one of the two conditions below holds, 
then the quadratic equation above is not identically zero. 
\[
\sum_{i=1}^{n} \det A_{N-\{ i \}} \left( \frac{p_i^2 + q_i^2 + 1}{q_i^2} \right)  \ne 0, 
\qquad 
(-1)^{b_{-}(\L')} \det A_{N} - | \det A_{N} | \ne 0 .
\]
This implies that $\lambda(M_{+})-\lambda(M) = 0$ holds for at most two positive integers $q$. 
This implies that, if at least one of the two conditions above holds, then there are at most two positive integers $q$ satisfying that $M_+$ and $M$ are orientation-preservingly homeomorphic. 

In the same way, we have 
\begin{align*}
\lambda(M_{-})-\lambda(M)
&=
- \left( (-1)^{b_{-}(\L')} 
\prod_{i=1}^{n} q_i 
\sum_{J' \subset N, J=J' \cup \{ 0\}}\det B_{N-J'} \hat{a}_1(L_{J}) \right) q \\
& \quad 
+ \frac{(-1)^{b_{-}(\L')} \prod_{i=1}^{n}q_i}{24} 
\det A_{N} \left( \frac{q^2+2}{q} \right) \\
&-\frac{(-1)^{b_{-}(\L')}\prod_{i=1}^{n}q_i}{24} 
\sum_{i=1}^{n} \det A_{N-\{ i \}}
\left( \frac{p_i^2 + q_i^2+1}{q_i^2} \right) 
\\   
& 
-\frac{\prod_{i=1}^{n}q_i}{24} | \det A_{N} | \left( \frac{q^2+2}{q} \right)  .
\end{align*}
If $\lambda(M_{-})-\lambda(M)=0$, then we obtain a quadratic equation in $q$ 
\[
-c_2 q^2 + c_1 q - c_0 =0. 
\]
Then, in the same way as above, it is shown that there are at most two positive integers $q$ satisfying that $M_-$ and $M$ are orientation-preservingly homeomorphic if at least one of the two conditions above holds. 

Consequently, there are at most four positive integers $q$ satisfying that $M_{+}$ or $M_{-}$ and $M$ are orientation-preservingly homeomorphic if at least one of the following two conditions holds: 
\[
\sum_{i=1}^{n} \det A_{N-\{ i \}} 
\left( \frac{p_i^2 + q_i^2 + 1}{q_i^2} \right) \ne 0, \qquad 
(-1)^{b_{-}(\L')} \det A_{N} - | \det A_{N} | \ne 0 . \qedhere
\]
\end{proof}

Based on the proof of Theorem~\ref{thm22}, we have the following corollary, from which Corollary~\ref{cor1} follows. 

\begin{corollary}\label{cor21}
Let $L = K_0 \cup K_1 \cup \dots \cup K_n$ be an $(n+1)$-component algebraically split link in $S^3$ with $n \ge 1$ and set $L' = K_1 \cup \cdots \cup K_n$. 
Then, for given positive rational numbers $p_1/q_1, \dots, p_n/q_n$, there are at most two positive integers $q$ satisfying that the 3-manifold obtained by $(1/q, p_1/q_1, \dots, p_n/q_n)$-surgery or $(- 1/q, p_1/q_1, \dots, p_n/q_n)$-surgery on $L$ is orientation-preservingly homeomorphic to that obtained by $(p_1/q_1,\dots, p_n/q_n)$-surgery on $L'$. 
\end{corollary}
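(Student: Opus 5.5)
The plan is to specialize the computation in the proof of Theorem~\ref{thm22} to the present setting. Since $L$ is algebraically split, all off-diagonal entries of $A_N$ vanish. So $A_N$ is the diagonal matrix with entries $p_1/q_1,\dots,p_n/q_n$, and every principal submatrix $A_{N-\{i\}}$ is also diagonal. Because all slopes are positive:
\begin{itemize}
\item $\det A_N=\prod_{i=1}^n p_i/q_i>0$ and $\det A_{N-\{i\}}=\prod_{j\ne i}p_j/q_j>0$;
\item $E(\L')=A_N$ has no negative eigenvalues, so $b_-(\L')=0$.
\end{itemize}

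Feeding this into the coefficients $c_2,c_1,c_0$ from the proof of Theorem~\ref{thm22}, we get $(-1)^{b_-(\L')}\det A_N-|\det A_N|=0$. Hence $c_0=0$, and the second bracket in $c_2$ drops out. On the other hand, each summand $\det A_{N-\{i\}}\,(p_i^2+q_i^2+1)/q_i^2$ is strictly positive, so
\[
c_1=-\frac{1}{24}\sum_{i=1}^n \det A_{N-\{i\}}\left(\frac{p_i^2+q_i^2+1}{q_i^2}\right)<0 .
\]
In particular the first hypothesis of Theorem~\ref{thm22} holds. Recall that an orientation-preserving homeomorphism forces equality of $\lambda$. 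The proof of Theorem~\ref{thm22} shows that $\lambda(M_+)=\lambda(M)$ is equivalent, after multiplying by $q$ and dividing by $\prod q_i>0$, to $c_2q^2+c_1q=0$. Since $q>0$, this reduces to the linear equation $c_2q+c_1=0$. Likewise, $\lambda(M_-)=\lambda(M)$ becomes $-c_2q+c_1=0$.

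Each linear equation has at most one positive solution $q$, because $c_1\ne0$ rules out the identically zero case. So there is at most one $q$ for the $+$ sign and at most one for the $-$ sign, giving at most two in total, as claimed. We could even sharpen this: if $q$ solves the first equation and $q'$ solves the second, then $c_2(q+q')=0$, which forces $c_2=0$ and hence $c_1=0$, a contradiction. But the bound of two is all the statement requires.

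The only delicate point is bookkeeping rather than an obstacle. We must check that the $\lambda$-difference formulas from Theorem~\ref{thm22} apply verbatim: the hypothesis $\lk(K_0,K_j)=0$ holds here, and all $q_i>0$ as the paper assumes. We must also confirm that the sign conventions make the $|\det A_N|$ and $\det A_N$ terms cancel exactly when $b_-(\L')=0$. Corollary~\ref{cor1} then follows from Proposition~\ref{prop:ck}, because the surgeries in question are the $\pm 1/q$ surgeries on $K_0$ that return $M$.
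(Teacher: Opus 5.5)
You follow the paper's proof step for step: specialize to $c_0=0$, use $c_1\ne 0$, and solve one linear equation per sign. The decisive step $c_1\neq 0$ fails, however, and the argument fails with it.

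The $q^1$-term in the difference formula of Theorem~\ref{thm22} comes from a double count in Lemma~\ref{lem21}. The singletons $J=\{i\}\subset N$ already appear in $\sum_{\emptyset\ne J\subset N}\frac{p_0}{q_0}\det A_{N-J}(-1)^{\sharp J}\theta(\L_J)$, with $(-1)\theta(\L_{\{i\}})=-\frac{p_i^2+q_i^2+1}{q_i^2}$. They are then subtracted a second time.

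Compute directly from \eqref{eq:LescopFormula} instead. We have $(-1)^{b_-(\L_\pm)}=\pm(-1)^{b_-(\L')}$, and the $K_0$-block of the linking matrix is $\pm 1/q$, which is cancelled by the factor $q_0=q$. Hence the entire $\theta$-sum over $\emptyset\ne J\subset N$ in $\lambda(M_\pm)$ equals the one in $\lambda(M)$, singletons included, and cancels. Adding the $J\ni 0$ terms and the signature/Dedekind terms, one gets
\[
\frac{q}{\prod_{i=1}^n q_i}\bigl(\lambda(M_\pm)-\lambda(M)\bigr)=\pm\bigl(c_2q^2+c_0\bigr).
\]
So the true linear coefficient is $0$.

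Under the corollary's hypotheses you correctly get $c_0=0$. The Casson--Walker--Lescop invariant then yields only the $q$-independent condition
\[
c_2=\sum_{J'\subset N}\det A_{N-J'}\,\hat a_1(L_{J'\cup\{0\}})=0 .
\]
If $c_2\neq 0$, no $q$ works at all. If $c_2=0$, the invariant gives no information.

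The second case really occurs, and the statement is false as written. Let $K_0$ be an unknot split from $L'$. Then $(\pm 1/q,p_1/q_1,\dots,p_n/q_n)$-surgery on $L$ is $S^3\# M\cong M$ for every $q\ge 1$. Here $c_2=0$, while the formula you relied on would give
\[
\lambda(M_+)-\lambda(M)=-\frac{\prod_{i=1}^n q_i}{24}\sum_{i=1}^n\det A_{N-\{i\}}\frac{p_i^2+q_i^2+1}{q_i^2}<0,
\]
which is impossible.

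So the ``bookkeeping'' you singled out as the only delicate point is exactly where things break. Your sharpening to at most one $q$ inherits the same error. No argument of this shape can succeed without an extra hypothesis such as $c_2\neq 0$, in the spirit of Theorems~\ref{thm3}--\ref{thm5}. That hypothesis would in fact give the stronger conclusion that no $q$ works.
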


\begin{proof}
We adopt the same setting as in the proof of Theorem~\ref{thm22}. 
Suppose further that $L$ is an algebraically split link and $p_1/q_1, \dots, p_n/q_n$ are positive rational numbers. 
Then, for $\L'$, namely $L'$ with $(p_1/q_1, \dots, p_n/q_n)$, the linking matrix $E(\L')$ is a diagonal matrix, and we see that 
\[
(-1)^{b_{-} (\L')} \det A_N = | \det A_N | .
\]
This implies that $c_0$ in the proof of Theorem~\ref{thm22} is always 0. 

On the other hand, since $L$ is algebraically split and $p_1/q_1, \dots, p_n/q_n$ are all positive, we have 
\[
\sum_{i=1}^{n} \det A_{N-\{ i \}} \left( \frac{p_i^2 + q_i^2 + 1}{q_i^2} \right)  > 0 , 
\]
that is, $c_1 \ne 0$. 

This implies that $\lambda(M_{+})-\lambda(M) = 0$ holds for at most one positive integer $q$, and so does $\lambda(M_{-})-\lambda(M) = 0$. 
Therefore there are at most two positive integers $q$ satisfying that $M_{\pm}$ and $M$ are orientation-preservingly homeomorphic. 
\end{proof}

Next we consider the case $n=1$ in Theorem~\ref{thm22}. 
In this case, the first condition of Theorem~\ref{thm22} is satisfied, since the determinant of the empty matrix is defined to be $1$. 
As for the second condition, we observe that
\[
\begin{cases}
(-1)^{b_{-}(\L')} = 1,\quad \det A_{N} = \lvert \det A_{N} \rvert & \text{if } \frac{p_1}{q_1} > 0,\\[6pt]
(-1)^{b_{-}(\L')} = 1,\quad \det A_{N} = \lvert \det A_{N} \rvert = 0 & \text{if } \frac{p_1}{q_1} = 0,\\[6pt]
(-1)^{b_{-}(\L')} = -1,\quad \det A_{N} = - \lvert \det A_{N} \rvert & \text{if } \frac{p_1}{q_1} < 0.
\end{cases}
\]
Hence, in all cases, the left-hand side of the condition equals zero. 
Therefore, arguing in the same way as in the proof of Corollary~\ref{cor21}, we obtain Corollary~\ref{cor0}.

In the same way, consider the case where $n \ge 2$, $p_1/q_1 = 0/1$, and $p_i/q_i \neq 0$ for $i = 2,3,\ldots,n$ in Theorem~\ref{thm22}. 
In particular, if $L'$ is algebraically split, then arguing in the same way as in the proof of Corollary~\ref{cor21}, we see that $c_0 = 0$. 
As for $c_1$, since the determinant of the empty matrix is defined to be $1$, it follows that the left-hand side equals $2$ (and hence is not equal to $1$).  
Note that, unlike in Corollary~\ref{cor21}, it is not necessary that the signs of $p_i/q_i$ be the same. 
Using this observation, one obtains, for example, the following statement.

\begin{corollary}
Let $M$ be a connected sum of $S^2 \times S^1$ and lens spaces $L(p_i, q_i)$, where $p_i$ and $q_i$ are coprime integers.  
Then, for any null-homologous knot $K$ in $M$, there are at most two knots in $M$ that are inequivalent to $K$, but whose exteriors are orientation-preservingly homeomorphic to the exterior of $K$. 
\end{corollary}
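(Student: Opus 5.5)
We plan to reduce the statement to Theorem~\ref{thm22} and Proposition~\ref{prop:ck}, following the paragraph just before it. First, $M=(S^2\times S^1)\#L(p_2,q_2)\#\cdots\#L(p_n,q_n)$ is obtained by surgery on the split link $L'=K_1\cup K_2\cup\cdots\cup K_n$ of unknots in $S^3$, with slopes $0/1$ on $K_1$ and $p_i/q_i$ on $K_i$ for $i\ge 2$. After normalizing so that $q_i>0$, we have $p_i\neq 0$. Given a null-homologous knot $K$ in $M$, we isotope it, as in the proof of Theorem~\ref{thm1}, so that it bounds a Seifert surface disjoint from the dual link. Taking $K_0$ to be $K$ viewed in $S^3$, we then have $\lk(K_0,K_j)=0$ for all $j\ge 1$. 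Null-homologous and non-meridional slopes $r$ on $K_0$ with $r$-surgery homeomorphic to $M$ must satisfy $H_1$ considerations that force $r=\pm 1/q$ with $q$ a positive integer; the slope $1/0$ is excluded as meridional, and $0$ changes $b_1$. So by Proposition~\ref{prop:ck} it suffices to show that at most two positive integers $q$ make $M_\pm\cong M$.

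Next we compute the coefficients $c_0,c_1,c_2$ from the proof of Theorem~\ref{thm22}. Since $L'$ is algebraically split, $A_N=\mathrm{diag}(0,p_2/q_2,\dots,p_n/q_n)$, so $\det A_N=0$ and hence $c_0=0$. The second coefficient is harder: $\det A_{N-\{i\}}$ vanishes for $i\ge2$, because the zero entry for $K_1$ survives. For $i=1$ it equals $\prod_{i\ge2}p_i/q_i\neq0$, so the sum in $c_1$ is $\prod_{i\ge 2}(p_i/q_i)\cdot(0+1+1)/1$. This is nonzero, whence $c_1\ne 0$; no sign hypothesis on the $p_i$ is needed. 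In the case $n=1$, where $M=S^2\times S^1$, the empty determinant gives the value $2$ directly.

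With $c_0=0$, the equation $c_2q^2+c_1q=0$ reduces, for $q>0$, to $c_2q+c_1=0$. This has at most one positive root since $c_1\neq0$, and the same holds for $-c_2q^2+c_1q=0$. Hence at most one $q$ gives $M_+\cong M$ and at most one gives $M_-\cong M$, as in Corollary~\ref{cor21}; here $\lambda$ is an orientation-preserving homeomorphism invariant. This gives at most two slopes, and Proposition~\ref{prop:ck} yields at most two inequivalent knots.

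The main obstacle is the homological reduction to slopes $\pm1/q$ when $b_1(M)=1$, together with checking that Lemma~\ref{lem21}'s formula (with its $\theta$ simplification) applies. We would argue the slope reduction by noting that surgery on a null-homologous knot along $a/b$ adds a $\mathbb{Z}/a$ summand to $H_1$ (or $\mathbb{Z}$ if $a=0$), so $H_1\cong H_1(M)$ forces $|a|=1$. The $\theta$ simplification is valid because $K_0$ is algebraically unlinked from all other components.
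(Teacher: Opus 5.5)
You follow the paper's own route: Proposition~\ref{prop:ck} plus the quadratic $c_2q^2+c_1q+c_0$ from the proof of Theorem~\ref{thm22}, with $c_0=0$ and $c_1\neq 0$. Your homological reduction to slopes $\pm1/q$ is fine. The gap is the step $c_1\neq0$: it is false, and the paper's sketch has the same defect.

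The only source of $c_1$ is the term $-\frac{(-1)^{b_-(\L')}\prod_{i\ge1}q_i}{24}\sum_{i=1}^n\det A_{N-\{i\}}\frac{p_i^2+q_i^2+1}{q_i^2}$ in $\lambda(M_\pm)-\lambda(M)$. It comes from a double count in Lemma~\ref{lem21}: the sum $\sum_{\emptyset\ne J\subset N}\frac{p_0}{q_0}\det A_{N-J}(-1)^{\sharp J}\theta(\L_J)$ already contains the singletons $J=\{i\}$, and the last sum subtracts them again. Work directly from \eqref{eq:LescopFormula} instead. The $\theta$-terms of $\L_\pm$ with $J\subset N$ are $\pm\frac1q\det A_{N-J}(-1)^{\sharp J}\theta(\L_J)$. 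With the prefactor $(-1)^{b_-(\L_\pm)}q\prod_{i\ge1}q_i/24$, where $(-1)^{b_-(\L_\pm)}=\pm(-1)^{b_-(\L')}$, they reproduce exactly the $\theta$-part of $\lambda(M)$ and cancel. The terms with $0\in J$, $\sharp J\ge2$ vanish, so only $J=\{0\}$ survives. Hence
\[
\lambda(M_\pm)-\lambda(M)=\pm\Bigl(\prod_{i=1}^n q_i\Bigr)\Bigl((-1)^{b_-(\L')}S\,q-\frac{(-1)^{b_-(\L')}\det A_N-|\det A_N|}{24}\cdot\frac{q^2+2}{q}\Bigr),
\]
where $S=\sum_{J'\subset N}\det B_{N-J'}\,\hat a_1(L_{J'\cup\{0\}})$. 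In particular $c_1\equiv0$. Two checks confirm this.
\begin{enumerate}[(i)]
\item If $K$ is an unknot in a $3$-ball of $M$, then $M_\pm\cong M$ for every $q$. Yet the formula you use gives $\lambda(M_\pm)-\lambda(M)=-\frac{(-1)^{b_-(\L')}}{12}\prod_{i\ge2}p_i\neq0$, and your conclusion ``at most one $q$ for each sign'' fails.
\item For $n=1$, $p_1/q_1=0/1$, that formula gives $q\,\hat a_1(K_0\cup K_1)-\frac1{12}$. The paper's own computation in the proof of Theorem~\ref{thm3} gives $\lambda(N)-\lambda(N')=q_0\,\hat a_1(K_0\cup K_1)$ for slope $1/q_0$, with no constant term.
\end{enumerate}

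With the corrected formula, in the setting of the corollary ($\det A_N=0$) you get $\lambda(M_\pm)-\lambda(M)=\pm(-1)^{b_-(\L')}\bigl(\prod_{i\ge1}q_i\bigr)S\,q$. Here $S=\sum_{1\in J'\subset N}\bigl(\prod_{i\in N-J'}p_i/q_i\bigr)\hat a_1(L_{J'\cup\{0\}})$, because $\det B_{N-J'}=0$ whenever $1\notin J'$. If $S\neq0$, this is stronger than needed: there is no such $q$ at all, and for $n=1$ it is Theorem~\ref{thm3}. But $S=0$ does occur, for instance for every knot lying in a $3$-ball of $M$. Then $K_0$ is split from $L'$, so $\hat a_1(L_{J'\cup\{0\}})=0$ for $J'\neq\emptyset$. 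For such knots $\lambda(M_\pm)=\lambda(M)$ for all $q$, and the Casson--Walker--Lescop invariant gives no constraint at all. So this argument cannot yield the bound for every null-homologous knot; a genuinely different ingredient is needed when $S=0$.
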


Together with this and Corollary~\ref{cor21}, since the connected sum of lens spaces is obtained by Dehn surgery on a trivial link, Corollary~\ref{cor2} follows.

\section{Constraints on knots in a 3-manifold with the Betti number 1 or 2}

In this section, we will give proofs of Theorems~\ref{thm3}--\ref{thm5}. 

\subsection{Proof of Theorem~\ref{thm3}}
For a surgery presentation with a two-component link, we give a rational surgery formula from the Lescop formula~\eqref{eq:LescopFormula} as follows. 

Let $K_0 \cup K_1$ be a two-component link with $\lk(K_0,K_1) = l$. 
Let $M$ be the 3-manifold obtained by $(p_0/q_0 , p_1/q_1)$-surgery on $K_0 \cup K_1$, that is, 
\[ \mathbb{L} = \set{ (K_0 , p_0/q_0) , ( K_1 , p_1/q_1) }\] 
is a surgery presentation of $M$. 
Here, $p_0$ and $q_0$ are coprime integers with $p_0 \neq 0$ and $q_0 > 0$. 
Similarly, $p_1$ and $q_1$ are coprime integers with $q_1 > 0$. 
By taking the mirror image if necessary, we may assume that $p_1 \ge 0$. 
In this setting, 
the linking matrix of $\mathbb{L}$ is
\[
E(\mathbb{L}) =
\begin{pmatrix}
    p_0 / q_0 & l \\
    l & p_1 / q_1  
\end{pmatrix},
\]
with 
$\det E(\mathbb{L}) 
= (p_0/q_0) \cdot (p_1/q_1 ) - l^2$ and $\mathrm{tr} E(\mathbb{L}) = p_0/q_0 + p_1/q_1 $. 
It follows that 
\begin{align*}
b_{-} (\L)
&=
\begin{cases}
    0   &  \text{ if } \det E(\mathbb{L}) \ge 0 \text{ and } \mathrm{tr} E(\mathbb{L}) > 0, \\
    1   &  \text{ if } \det E(\mathbb{L}) < 0, \text{ or } \det E(\mathbb{L}) = 0 \text{ and } \mathrm{tr} E(\mathbb{L}) < 0, \\
    2   &  \text{ if } \det E(\mathbb{L}) > 0 \text{ and } \mathrm{tr} E(\mathbb{L}) < 0, 
\end{cases}\\
(-1)^{b_{-} (\L)}
&=
\begin{cases}
    1   &  \text{ if } \det E(\mathbb{L}) > 0, \text{ or } \det E(\mathbb{L}) = 0 \text{ and } \mathrm{tr} E(\mathbb{L}) > 0, \\
    -1   &  \text{ otherwise }, 
\end{cases}
\end{align*}
and 
\[
\sigma(E(\mathbb{L}))
=
\begin{cases}
    2   &  \text{ if } \det E(\mathbb{L}) >0 \text{ and } \mathrm{tr} E(\mathbb{L}) > 0, \\
    1   &   \text{ if } \det E(\mathbb{L}) = 0 \text{ and } \mathrm{tr} E(\mathbb{L}) > 0, \\
    0  &  \text{ if } \det E(\mathbb{L}) < 0, \\
    -1   &  \text{ if } \det E(\mathbb{L}) = 0 \text{ and } \mathrm{tr} E(\mathbb{L}) < 0, \\
    -2    &  \text{ if } \det E(\mathbb{L}) > 0 \text{ and } \mathrm{tr} E(\mathbb{L}) < 0. 
\end{cases} 
\]
Set $N_0 = \{ 0,1 \}$ and consider $J \subset N_0$. 
When $J = \{ 0 \} \subset N_0$, we see that 
\[
\det( E(\mathbb{L}_{N_0 - J}) )
= 
\frac{p_1}{q_1} 
\ , \quad 
\det( E(\mathbb{L}_{N_0 - J} ; J ) ) 
= 
\frac{p_1}{q_1} + l. 
\]
When $J = \{ 1 \} \subset N_0$, we see that 
\[
\det( E(\mathbb{L}_{N_0 - J}) )
= 
\frac{p_0}{q_0}
\ , \quad 
\det( E(\mathbb{L}_{N_0 - J} ; J ) ) 
= 
\frac{p_0}{q_0} + l.
\]
When $J = \{ 0 , 1 \} \subset N_0$, we see that 
\[
\det( E(\mathbb{L}_{N_0 - J}) )
= 
\det( E(\mathbb{L}_{N_0 - J} ; J ) ) 
= 
1.
\]
Also, the following is obtained. See \cite[p.\ 19]{Lescop96} for details. 
\begin{equation}\label{eq:theta}
    \theta(\mathbb{L}_J) =
\begin{cases}
    \frac{ p_0^2 + q_0^2 + 1 }{ q_0^2 }   &   \text{ if } J = \{ 0 \}, \\[6pt]
    \frac{ p_1^2 + q_1^2 + 1 }{ q_1^2 }   &   \text{ if } J = \{ 1 \},\\[6pt]
2 l^3 + 2 l^2 \left( \frac{p_0}{q_0} + \frac{p_1}{q_1} \right) -2l  
    &   \text{ if } J = \{ 0 ,1 \}.  
\end{cases}
\end{equation}
Consequently, we have the following. 
\begin{align}\label{eq3+}
\begin{aligned}
\lambda(M) 
=& \
(-1)^{b_{-} (\L)}
\Biggl(
q_0 q_1 
\left( 
\left( \frac{p_1}{q_1} + l \right) \hat{a}_1(K_0) 
+
\left( \frac{p_0}{q_0} + l \right) \hat{a}_1(K_1) 
+
\hat{a}_1(K_0 \cup K_1) 
\right) \\
& 
- \frac{q_0 q_1}{24} 
\left(
\frac{p_1}{q_1} \cdot \frac{ p_0^2 + q_0^2 + 1 }{ q_0^2 }
+
\frac{p_0}{q_0} \cdot \frac{ p_1^2 + q_1^2 + 1 }{ q_1^2 }
-
\left( 2 l^3 + 2 l^2 \left( \frac{p_0}{q_0} + \frac{p_1}{q_1} \right) - 2 l \right)
\right) \Biggr)\\
&
+ \left| p_0 p_1 - l^2 q_0 q_1 \right| 
\left(
\frac{\sigma(E(\mathbb{L}))}{8}
+
\frac{s(p_0, q_0) }{2}
+
\frac{s(p_1, q_1) }{2}
\right).
\end{aligned}
\end{align}
Actually, this formula with full generality is not necessary in the following. 
We here include it for future use. 
Also see Appendix to compare it with known such formulas, given in \cite{Ito2022, Maruyama12}. 

Then, by Proposition~\ref{prop:ck}, Theorem~\ref{thm3} readily follows from the following two propositions. 

\begin{proposition}
Let $L=K_0 \cup K_1$ be a two-component link in $S^3$ with $\lk(K_0,K_1)=0$. 
Let $q_0$ and $q'_0$ be distinct positive integers coprime to a non-zero integer $p_0$. 
If $\hat{a}_1(L) \ne 0$, then $(p_0/q_0,0)$- and $(p'_0/q'_0,0)$-surgeries on $L$ give distinct manifolds for $p'_0 = \pm p_0$. 
\end{proposition}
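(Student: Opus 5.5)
The plan is to compute $\lambda$ of both surgered manifolds explicitly using the two-component formula \eqref{eq3+}, and to show that the two values differ whenever $\hat a_1(L)\neq 0$. Since $\lambda$ is an invariant of oriented closed $3$-manifolds, this shows the two manifolds are not orientation-preservingly homeomorphic. Note that homology alone cannot distinguish them: both have $H_1\cong\mathbb{Z}\oplus\mathbb{Z}/|p_0|$, because $|p'_0|=|p_0|$. This is why the Casson--Walker--Lescop invariant is needed.

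First I would specialize \eqref{eq3+} to $l=0$, $p_1=0$, $q_1=1$; the normalization $p_1\ge 0$ is satisfied. Then $\det E(\mathbb{L})=0$ and $\mathrm{tr}\,E(\mathbb{L})=p_0/q_0$. By the case analysis preceding \eqref{eq3+}, we get $(-1)^{b_-(\mathbb{L})}=\varepsilon$, where $\varepsilon=\sign p_0$. The last line of \eqref{eq3+} carries the factor $|p_0p_1-l^2q_0q_1|=0$, so the signature and Dedekind-sum contributions disappear; this is what makes the computation clean. In the remaining terms:
\begin{itemize}
\item the $\hat a_1(K_0)$ term has coefficient $p_1/q_1+l=0$;
\item the $\hat a_1(K_1)$ term contributes $q_0\cdot (p_0/q_0)\,\hat a_1(K_1)=p_0\hat a_1(K_1)$;
\item the link term contributes $q_0\hat a_1(L)$.
\end{itemize}
In the $\theta$-part, the first summand is multiplied by $p_1=0$, and the $\{0,1\}$ term vanishes since $l=0$. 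Only $(p_0/q_0)\cdot(0+1+1)/1$ survives, giving $-\frac{q_0}{24}\cdot\frac{2p_0}{q_0}=-\frac{p_0}{12}$. Altogether this yields
\[
\lambda(M_{p_0/q_0}) = \varepsilon\bigl(q_0\hat a_1(L) + p_0\hat a_1(K_1) - \tfrac{p_0}{12}\bigr) = \varepsilon q_0\,\hat a_1(L) + |p_0|\bigl(\hat a_1(K_1)-\tfrac{1}{12}\bigr).
\]

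Next I would compare this with the same expression for $p'_0/q'_0$, where $p'_0=\pm p_0$ and $\varepsilon'=\sign p'_0$. Since $|p'_0|=|p_0|$, the second summand cancels, leaving
\[
\lambda(M_{p_0/q_0})-\lambda(M_{p'_0/q'_0})=\hat a_1(L)\,(\varepsilon q_0-\varepsilon' q'_0).
\]
If $\varepsilon=\varepsilon'$, the factor is $\pm(q_0-q'_0)\neq 0$ because $q_0\neq q'_0$. If $\varepsilon\neq\varepsilon'$, it is $\pm(q_0+q'_0)\neq 0$ because both denominators are positive. Hence the difference is nonzero whenever $\hat a_1(L)\neq0$, and the manifolds are distinct.

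The only delicate step is the bookkeeping in \eqref{eq3+} in the degenerate case $\det E(\mathbb{L})=0$. One must read $(-1)^{b_-}$ correctly from the trace, and confirm that the absolute-determinant factor kills the signature and Dedekind-sum terms rather than leaving a residual $\pm 1/8$ contribution. I would double-check this against Lemma~\ref{lem21} with $n=1$, $K_0$ the component carrying $p_0/q_0$, and $A_N=(0)$.
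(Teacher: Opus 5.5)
Your proposal is correct and follows the paper's proof. Both specialize \eqref{eq3+} to $p_1=0$, $q_1=1$, $l=0$, which gives $\lambda=\sign(p_0)\bigl(p_0\hat a_1(K_1)+q_0\hat a_1(L)-p_0/12\bigr)$, and then compare the two surgeries; your rewriting as $\varepsilon q_0\hat a_1(L)+|p_0|\bigl(\hat a_1(K_1)-\tfrac{1}{12}\bigr)$ just merges the paper's two cases $p_0'=\pm p_0$, where the differences are $\sign(p_0)(q_0-q_0')\hat a_1(L)$ and $(q_0+q_0')\hat a_1(L)$, into a single line.
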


\begin{proof}
Under the assumptions that $p_1=0$, $q_1=1$ and $\lk(K_0,K_1)=0$, the RHS of Formula~\eqref{eq3+} is simply expressed as follows. 
\begin{align*}
&\sign(p_0) 
\left(
q_0 
\left( 
\frac{p_0}{q_0} \hat{a}_1(K_1) 
+
\hat{a}_1(K_0 \cup K_1) 
\right) 
- \frac{q_0}{24} 
\left( \frac{2p_0}{q_0} \right) 
\right)\\
&=
\sign(p_0) 
\left(
p_0 \hat{a}_1(K_1) 
+
q_0 \hat{a}_1(K_0 \cup K_1) 
- \frac{p_0}{12} \right), 
\end{align*}
since $(-1)^{b_{-} (\L)} = \sign(p_0)$ in this case, 
where $\sign(p_0) = +1$ (resp.\ $-1$) if $p_0 >0$ (resp.\ $p_0<0$). 

We now consider the 3-manifolds $N$ and $N'$, which are obtained by $( p_0/q_0 , 0 )$- and $(p'_0/q'_0 , 0 )$-surgeries on $L = K_0 \cup K_1$, respectively, where $q_0, q'_0$ are positive integers coprime to a positive integer $p_0$. 
Note that $q_0 \ne q'_0$ holds in this case. 

Then, if $p'_0 = p_0$, $\lambda(N) - \lambda(N') $ is calculated as follows from the above. 
\begin{align*}
&\sign(p_0) 
\left(
\left(
p_0 \hat{a}_1(K_1) 
+
q_0 \hat{a}_1(L) 
- \frac{p_0}{12} \right)
-
\left(
p'_0 \hat{a}_1(K_1) 
+
q'_0 \hat{a}_1(L) 
- \frac{p'_0}{12} \right)
\right) \\
& =
\sign(p_0) 
\left( q_0 - q'_0 \right) \hat{a}_1(L).
\end{align*}
Thus, under the assumption that $\hat{a}_1(L) \ne 0$, $(p_0/q_0,0)$- and $(p'_0/q'_0,0)$-surgeries on $L$ give distinct manifolds. 

Next, we consider the case that $p_0 = -p'_0$. 
Without loss of generality, we may assume that $p_0 >0$. 
In this case, we have
\[
\lambda(N) = p_0 \hat{a}_1(K_1) + q_0 \hat{a}_1(L) - \frac{p_0}{12}, 
\]
and 
\[
\lambda(N') 
= - \left( (-p_0) \hat{a}_1(K_1) + q'_0 \hat{a}_1(L) - \frac{-p_0}{12} \right)
= p_0 \hat{a}_1(K_1) - q'_0 \hat{a}_1(L) - \frac{p_0}{12}. 
\]
It follows that 
\[
\lambda(N) - \lambda(N') =
( q_0 + q'_0 ) \hat{a}_1(L) . 
\]
Thus, under the assumption that $\hat{a}_1(L) \ne 0$, 
$(p_0/q_0,0)$- and $(p'_0/q'_0,0)$-surgeries on $L$ give distinct manifolds. 
\end{proof}

\begin{proposition}
Let $L=K_0 \cup K_1$ be a two-component link in $S^3$ with $\lk(K_0,K_1)=0$. 
Let $q_0$ be a positive integer coprime to a non-zero integer $p_0$. 
If $\hat{a}_1(L) \ne 0$, then $(\pm 1/q_0,0)$-surgery on $L$ and $0$-surgery on $K_1$ give distinct manifolds. 
\end{proposition}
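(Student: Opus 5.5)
The plan is to compare Casson--Walker--Lescop invariants directly. Let $N_\pm$ be the manifold obtained by $(\pm 1/q_0, 0)$-surgery on $L$, and $M$ the manifold obtained by $0$-surgery on $K_1$. I would compute $\lambda(N_\pm)$ from the simplified expression derived in the proof of the preceding proposition, which specializes Formula~\eqref{eq3+} to $p_1=0$, $q_1=1$ and $l=0$. I would compute $\lambda(M)$ from Lescop's formula~\eqref{eq:LescopFormula} for the one-component presentation $(K_1, 0/1)$. The aim is to show that $\lambda(N_\pm) - \lambda(M) = \pm q_0 \hat{a}_1(L)$, which is nonzero under the hypothesis $\hat{a}_1(L)\neq 0$.

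First, with $p_0 = \pm 1$, the preceding proof gives
\[
\lambda(N_\pm) = \sign(p_0)\left( p_0 \hat{a}_1(K_1) + q_0 \hat{a}_1(L) - \frac{p_0}{12}\right) = \hat{a}_1(K_1) \pm q_0 \hat{a}_1(L) - \frac{1}{12}.
\]
Before using this, I would double-check the ingredients it relies on. Since $\det E(\L)=0$ and $\mathrm{tr} E(\L) = p_0/q_0$, we get $(-1)^{b_-(\L)} = \sign(p_0)$. The term carrying $|\det E(\L)|$ vanishes, so neither the signature nor the Dedekind sums contribute. Among the $\theta$-terms, only $J=\{1\}$ survives, with coefficient $p_0/q_0$; it gives $\frac{p_0}{q_0}\cdot 2$, which matches the $-p_0/12$ above.

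Second, for $M$ I would take $n=1$, $p_1/q_1=0/1$ in~\eqref{eq:LescopFormula}. Here $E = (0)$, so $b_-=0$ and $|\det E| = 0$. The only $J$ is $\{1\}$, with $\det E(\emptyset;J)=\det E(\emptyset)=1$ and $\theta = (0+1+1)/1 = 2$. This gives
\[
\lambda(M) = \hat{a}_1(K_1) - \frac{2}{24} = \hat{a}_1(K_1) - \frac{1}{12}.
\]
Subtracting the two expressions yields $\lambda(N_\pm)-\lambda(M) = \pm q_0\hat{a}_1(L) \neq 0$. Since $\lambda$ is an invariant of oriented manifolds, $N_\pm$ and $M$ are then not orientation-preservingly homeomorphic.

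The main obstacle is the $b_1=1$ bookkeeping when applying Lescop's formula to the degenerate presentation $(K_1,0)$. One must use the correct sign convention for $b_-$ of a zero eigenvalue (it is not counted as negative, matching the case analysis in the paper) and the empty-determinant convention, and make sure the normalization agrees with the one used for $N_\pm$. As a sanity check, I would verify the case $K_1$ unknot: there $M = S^2\times S^1$ and $\lambda = -1/12$, consistent with Lescop's normalization. Once both computations use the same convention, the cancellation of $\hat{a}_1(K_1)-1/12$ is immediate.
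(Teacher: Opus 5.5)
Your proposal is correct and is essentially the paper's proof. You specialize Formula~\eqref{eq3+} to get $\lambda(N_\pm)=\hat{a}_1(K_1)\pm q_0\hat{a}_1(L)-\frac{1}{12}$, apply Lescop's formula to the one-component presentation $(K_1,0/1)$ to get $\lambda(M)=\hat{a}_1(K_1)-\frac{1}{12}$, and conclude from the nonzero difference $\pm q_0\hat{a}_1(L)$; your additional checks of $(-1)^{b_-(\L)}=\sign(p_0)$, the vanishing $|\det E(\L)|$ term, and the surviving $\theta$-term are all accurate.
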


\begin{proof}
We consider the 3-manifolds $N$ and $N'$, which are obtained by $( p_0/q_0 , 0 )$-surgery on $L = K_0 \cup K_1$ for $p_0 = \pm 1$ and a positive integer $q_0$ and by $0$-surgery on the knot $K_1$, respectively. 
From Formula~\eqref{eq3+}, as before, we have the following. 
\begin{align*}
\lambda(N) 
&=\sign(p_0) 
\left(
q_0 
\left( 
\frac{p_0}{q_0} \hat{a}_1(K_1) 
+
\hat{a}_1(K_0 \cup K_1) 
\right) 
- \frac{q_0}{24} 
\left( \frac{2p_0}{q_0} \right) 
\right)\\
& =
\sign(p_0) 
\left(
p_0 \hat{a}_1(K_1) 
+
q_0 \hat{a}_1(K_0 \cup K_1) 
- \frac{p_0}{12} \right) \\
& =
\hat{a}_1(K_1) 
+
\sign(p_0) q_0 \hat{a}_1(K_0 \cup K_1) 
- \frac{1}{12}. 
\end{align*}
We remark that $\sign(p_0) = p_0 = \pm 1$.

On the other hand, setting $p_1=0$ and $q_1=1$, we have the following from the Lescop formula~\eqref{eq:LescopFormula}. (Also see Appendix.) 
\[
\lambda(N') = \hat{a}_1(K) - \frac{1}{12} . 
\]
Thus, the following is obtained. 
\[
\lambda(N) - \lambda(N') 
= \sign(p_0) q_0 \hat{a}_1(L) 
= \pm q_0 \hat{a}_1(L).
\]
It implies that, under the assumption that $\hat{a}_1(L) \ne 0$, $(\pm 1/q_0,0)$-surgery on $L$ and $0$-surgery on $K_1$ give distinct manifolds. 
\end{proof}

\subsection{Proof of Theorem~\ref{thm4}}
Similarly as above, to prove Theorem~\ref{thm4}, for a surgery presentation with an algebraically split three-component link, we give a rational surgery formula from the Lescop formula~\eqref{eq:LescopFormula} as follows. 

Let $M$ be the 3-manifold obtained by $(p_0/q_0, p_1/q_1, p_2/q_2)$-surgery on $L=K_0 \cup K_1 \cup K_2$ in $S^3$, where $L$ is an algebraically split link. 
Setting $N_0 = \{ 0, 1 , 2 \}$ and $\mathbb{L}=\set{(K_i, p_i/q_i)}_{i \in N_0}$, we apply Formula~\eqref{eq:LescopFormula}. 
The linking matrix $E(\mathbb{L})$ is 
    \[
    E(\mathbb{L}) = 
    \begin{pmatrix}
        p_0/q_0 & 0 & 0 \\
        0 & p_1/q_1 & 0\\
        0 & 0 & p_2/q_2
    \end{pmatrix}, 
    \]
and so, 
$\det (E(\mathbb{L}) ) = \dfrac{p_0 p_1 p_2}{q_0 q_1 q_2}$, and 
\begin{align*}
\det( E(\mathbb{L}_{N - J}) )= \det( E(\mathbb{L}_{N - J} ; J ) )
    &=
    \begin{cases}
    \dfrac{p_1 p_2}{q_1 q_2} & \text{ if } J = \{ 0 \}, \\[10pt] 
    \dfrac{p_0 p_2}{q_0 q_2} & \text{ if } J = \{ 1 \}, \\[10pt]  
    \dfrac{p_0 p_1}{q_0 q_1} & \text{ if } J = \{ 2 \}, \\[10pt]  
    \dfrac{p_2}{q_2} & \text{ if } J = \{ 0, 1\}, \\[10pt]  
    \dfrac{p_1}{q_1} & \text{ if } J = \{ 0, 2\}, \\[10pt]  
    \dfrac{p_0}{q_0} & \text{ if } J = \{ 1, 2\}.
    \end{cases}
\end{align*}
Since $\lk(K_i,K_j)=0$ for $i \ne j$ ($ 0 \le i,j \le 2$), 
\[
\theta(\mathbb{L}_J) =
\begin{cases}
    \frac{ p_j^2 + q_j^2 + 1 }{ q_j^2 } & \text{ if $J = \{ j \}$ for $j = 0,1,2$} ,\\
    0   &   \text{ if $\sharp J \ge 2$} .
\end{cases}
\]
From the above, we obtain the following. 
\begin{align*}
\lambda(M)
& =(-1)^{b_{-} (\L)} \Bigl( q_0 p_1 p_2 \hat{a}_1(K_0) 
+ p_0 q_1 p_2 \hat{a}_1(K_1) + p_0 p_1 q_2 \hat{a}_1(K_2) \\
& \quad + q_0 q_1 p_2 \hat{a}_1(K_0 \cup K_1) + q_0 p_1 q_2 \hat{a}_1(K_0  \cup  K_2) +p_0 q_1 q_2 \hat{a}_1(K_1 \cup K_2)\\
& \quad +q_0 q_1 q_2 \hat{a}_1(K_0 \cup K_1 \cup K_2) \Bigr)\\
& \quad +\frac{(-1)^{b_{-} (\L)}}{24} \Biggl( - \frac{p_1 p_2}{q_0} \left( p_0^2 + q_0^2 + 1 \right) -\frac{p_0 p_2}{q_1} \left( p_1^2 + q_1^2 + 1 \right)\\
&\quad - \frac{p_0 p_1}{q_2} \left( p_2^2 + q_2^2 + 1 \right) \Biggr) \\
& \quad +|p_0 p_1 p_2| \left( \frac{\sigma( E(\L))}{8} +\frac{s(p_0, q_0)}{2}  + \frac{s(p_1,q_1)}{2} + \frac{s(p_2,q_2)}{2} \right) .
\end{align*}

In the following, we consider the case that $p_2=0$ and $q_2=1$ in the above formula. 
Let $M_0$ be the 3-manifold obtained by $(p_0/q_0, p_1/q_1, 0/1)$-surgery on $L$. 
Then, its Casson--Walker--Lescop invariant is calculated as follows. 
\begin{align}\label{fml32}
\begin{aligned}
\lambda(M_0)
&= 
(-1)^{b_{-} (\L)} 
\Bigl( p_0 p_1 \hat{a}_1(K_2) 
+ q_0 p_1 \hat{a}_1(K_0 \cup K_2) \\
& \qquad \qquad +p_0 q_1 \hat{a}_1(K_1 \cup K_2)
+ q_0 q_1 \hat{a}_1(K_0 \cup K_1 \cup K_2) 
- \frac{p_0 p_1}{12} \Bigr). 
\end{aligned}
\end{align}

Now Theorem~\ref{thm4} follows from the next two propositions. 

\begin{proposition}
Let $L = K_0 \cup K_1 \cup K_2$ be an algebraically split, three-component link in $S^3$.
Let $q_0$ and $q'_0$ be distinct positive integers coprime to a non-zero integer $p_0$ and set $p'_0 = \pm p_0$. 
For a non-zero rational number $p_1/q_1$, if $p_1 \hat{a}_1(K_0 \cup K_2) + q_1 \hat{a}_1(K_0 \cup K_1 \cup K_2) \ne 0$, then $(p_0/q_0, p_1/q_1, 0/1)$- and $( p'_0/q'_0, p_1/q_1, 0/1)$-surgeries on $L$ give distinct manifolds.     
\end{proposition}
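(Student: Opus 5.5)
The plan is to compare the Casson--Walker--Lescop invariants of the two surgered manifolds using Formula~\eqref{fml32}. The key is to determine the sign $(-1)^{b_{-}(\L)}$ for the diagonal matrix $E(\L)=\mathrm{diag}(p_0/q_0,\,p_1/q_1,\,0)$. With $p_0,p_1\neq 0$, the number of negative eigenvalues is the number of negative entries among $p_0/q_0$ and $p_1/q_1$. Hence $(-1)^{b_{-}(\L)}=\sign(p_0)\sign(p_1)$. This is the only point where one must be careful: the zero eigenvalue is not counted, which is consistent with the convention implicitly used in the two-component case. Substituting into \eqref{fml32} and writing $\epsilon=\sign(p_1)$, we get
\[
\lambda(M_0)=\sign(p_0)\,\epsilon\Bigl(p_0p_1\hat{a}_1(K_2)+p_0q_1\hat{a}_1(K_1\cup K_2)-\frac{p_0p_1}{12}+q_0\bigl(p_1\hat{a}_1(K_0\cup K_2)+q_1\hat{a}_1(K_0\cup K_1\cup K_2)\bigr)\Bigr).
\]
Set $C=p_1\hat{a}_1(K_0\cup K_2)+q_1\hat{a}_1(K_0\cup K_1\cup K_2)$, which is nonzero by hypothesis.

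The first factor, $\sign(p_0)\,p_0=|p_0|$, makes the first three terms depend only on $|p_0|$ together with the fixed data $p_1,q_1$ and the link. Thus $\lambda(M_0)=\epsilon\bigl(|p_0|\,X+\sign(p_0)\,q_0\,C\bigr)$, where $X=p_1\hat{a}_1(K_2)+q_1\hat{a}_1(K_1\cup K_2)-p_1/12$ is independent of the choice of $p_0$ and $q_0$.

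Let $N$ and $N'$ denote the results of $(p_0/q_0,p_1/q_1,0)$- and $(p'_0/q'_0,p_1/q_1,0)$-surgery, and split into two cases.

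\begin{itemize}
\item If $p'_0=p_0$, then $\lambda(N)-\lambda(N')=\epsilon\,\sign(p_0)(q_0-q'_0)\,C$. This is nonzero because $q_0\neq q'_0$.
\item If $p'_0=-p_0$, then $\lambda(N)-\lambda(N')=\epsilon\,\sign(p_0)(q_0+q'_0)\,C$. This is nonzero because $q_0,q'_0>0$. In this case the condition $q_0\neq q'_0$ is not even needed.
\end{itemize}

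Since $\lambda$ is an invariant of oriented manifolds, distinct values show that $N$ and $N'$ are not orientation-preservingly homeomorphic, i.e.\ the surgeries give distinct manifolds. The main step to double-check is the evaluation of $b_{-}(\L)$ with the zero diagonal entry. I also want to confirm that formula~\eqref{fml32} is being applied with the stated sign conventions, namely $q_i>0$ and the mirror normalization not used here; everything else is direct substitution.
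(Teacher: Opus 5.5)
Your proof is correct and follows the paper's argument. Both compute $\lambda$ via Formula~\eqref{fml32} and find that $\lambda(N)-\lambda(N')$ equals, up to sign, $(q_0-q'_0)C$ or $(q_0+q'_0)C$ according as $p'_0=p_0$ or $p'_0=-p_0$, where $C=p_1\hat{a}_1(K_0\cup K_2)+q_1\hat{a}_1(K_0\cup K_1\cup K_2)$. The only difference is that you carry the sign $(-1)^{b_{-}(\L)}=\sign(p_0)\sign(p_1)$ explicitly, whereas the paper normalizes $p_1>0$ by taking a mirror image and assumes $p_0>0$ in the second case; this spares you from checking that the hypothesis is invariant under mirroring.
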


\begin{proof}
Let $N$ and $N'$ be the 3-manifolds obtained by $(p_0/q_0, p_1/q_1, 0/1)$- and $(p'_0/q'_0, p_1/q_1, 0/1)$-surgeries on $L$, respectively, assuming that $p'_0 = \pm p_0$ and $p_1/q_1 \ne 0$.  
Taking the mirror image if necessary, we may assume that $p_1>0$. 

Suppose first that $p'_0 = p_0$. 
Since $(-1)^{b_{-} (\L)} = \sign(p_0)$ in this case, we have the following. 
\begin{align*}
\lambda(N) - \lambda(N')
=& \ 
\sign(p_0)
\Biggl(
\Bigl( p_0 p_1 \hat{a}_1(K_2) 
+ q_0 p_1 \hat{a}_1(K_0 \cup K_2) \\
& \ +p_0 q_1 \hat{a}_1(K_1 \cup K_2)
+ q_0 q_1 \hat{a}_1(K_0 \cup K_1 \cup K_2) 
- \frac{p_0 p_1}{12} \Bigr) \\
& \ -
\Bigl( p_0 p_1 \hat{a}_1(K_2) 
+ q'_0 p_1 \hat{a}_1(K_0 \cup K_2) + p_0 q_1 \hat{a}_1(K_1 \cup K_2)\\
& \ + q'_0 q_1 \hat{a}_1(K_0 \cup K_1 \cup K_2) 
- \frac{p_0 p_1}{12} \Bigr)
\Biggr)\\
=& 
\sign(p_0)
( q_0 - q'_0) \left(
p_1 \hat{a}_1(K_0 \cup K_2) 
+ q_1 \hat{a}_1(K_0 \cup K_1 \cup K_2) 
\right). 
\end{align*}

Next suppose that $p'_0 = - p_0$. 
Without loss of generality, we may assume that $p_0>0$. 
Then, we have the following. 
\begin{align*}
\lambda(N) - \lambda(N')
=& \  
\Bigl( p_0 p_1 \hat{a}_1(K_2) 
+ q_0 p_1 \hat{a}_1(K_0 \cup K_2) \\
& \ +p_0 q_1 \hat{a}_1(K_1 \cup K_2)
+ q_0 q_1 \hat{a}_1(K_0 \cup K_1 \cup K_2) 
- \frac{p_0 p_1}{12} \Bigr) \\
& \ -
(-1)
\Bigl( (-p_0) p_1 \hat{a}_1(K_2) 
+ q'_0 p_1 \hat{a}_1(K_0 \cup K_2) \\
& \ + (-p_0) q_1 \hat{a}_1(K_1 \cup K_2)
+ q'_0 q_1 \hat{a}_1(K_0 \cup K_1 \cup K_2) 
- \frac{(-p_0) p_1}{12} \Bigr)\\
=& \ 
( q_0 + q'_0) \left(
p_1 \hat{a}_1(K_0 \cup K_2) 
+ q_1 \hat{a}_1(K_0 \cup K_1 \cup K_2) 
\right).  
\end{align*}

This implies that, in both cases, $(p_0/q_0,p_1/q_1, 0/1)$- and $(p_0'/q'_0, p_1/q_1, 0/1)$-surgeries on $L$ give distinct manifolds if $p_1 \hat{a}_1(K_0 \cup K_2) + q_1 \hat{a}_1(K_0 \cup K_1 \cup K_2) \ne 0$. 
\end{proof}

\begin{proposition}
Let $L = K_0 \cup K_1 \cup K_2$ be an algebraically split, three-component link in $S^3$.
For a non-zero rational number $p_1/q_1$ and for a positive integer $q_0$, if $p_1 \hat{a}_1(K_0 \cup K_2) + q_1 \hat{a}_1(K_0 \cup K_1 \cup K_2) \ne 0$, then $(\pm 1/q_0, p_1/q_1, 0/1)$-surgery on $L$ and $( p_1/q_1, 0/1)$-surgery on $K_1 \cup K_2$ give distinct manifolds.     
\end{proposition}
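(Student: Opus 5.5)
We argue exactly as in the proof of the analogous proposition for Theorem~\ref{thm3}: we compute $\lambda$ of both manifolds and show that their difference is a nonzero multiple of $p_1 \hat{a}_1(K_0 \cup K_2) + q_1 \hat{a}_1(K_0 \cup K_1 \cup K_2)$. Taking the mirror image if necessary, we assume $p_1>0$. Let $N$ be the manifold obtained by $(p_0/q_0, p_1/q_1, 0/1)$-surgery on $L$ with $p_0=\pm 1$, and let $N'$ be the manifold obtained by $(p_1/q_1, 0/1)$-surgery on $K_1 \cup K_2$.

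For $N$ we apply Formula~\eqref{fml32}. The linking matrix is $\mathrm{diag}(p_0/q_0, p_1/q_1, 0)$ with $p_1>0$, so $b_{-}(\L)$ counts only the sign of $p_0$, and $(-1)^{b_-(\L)}=\sign(p_0)=p_0$. Since $p_0^2=1$, this gives
\[
\lambda(N) = p_1 \hat{a}_1(K_2) + \sign(p_0) q_0 p_1 \hat{a}_1(K_0\cup K_2) + q_1 \hat{a}_1(K_1\cup K_2) + \sign(p_0) q_0 q_1 \hat{a}_1(K_0\cup K_1\cup K_2) - \frac{p_1}{12}.
\]

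For $N'$ we use the two-component Formula~\eqref{eq3+} with the link $K_1 \cup K_2$, slopes $(p_1/q_1, 0/1)$ and $l=0$. Here we must check that this formula, derived with $p_0\neq0$ as the first slope, still applies when the roles are $(p_1/q_1, 0)$. We can either rederive it directly from \eqref{eq:LescopFormula}, or relabel so that the first component carries the nonzero slope $p_1/q_1$.

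Since $\det E=0$ and $\mathrm{tr} E = p_1/q_1>0$, we get $(-1)^{b_-}=1$, and the final term vanishes because $|p_1\cdot 0 - 0|=0$. Among the $\theta$-terms, the $J=\{1\}$ term carries the factor $\det E(\L_{\{2\}})=0$, the $J=\{1,2\}$ term has $\theta=0$ since $l=0$, and only the $J=\{2\}$ term survives, with $\theta=(0+1+1)/1=2$ weighted by $p_1/q_1$. This yields
\[
\lambda(N') = p_1 \hat{a}_1(K_2) + q_1 \hat{a}_1(K_1\cup K_2) - \frac{p_1}{12}.
\]
The $\hat a_1(K_1)$ term drops out because it is multiplied by the slope $0$ of $K_2$. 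As a sanity check, setting $p_1/q_1 = 1/0$ formally should recover $\lambda(0\text{-surgery on }K_2)=\hat a_1(K_2)-1/12$, consistent with the Appendix.

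Subtracting, we obtain
\[
\lambda(N)-\lambda(N') = \sign(p_0)\, q_0 \bigl(p_1 \hat{a}_1(K_0\cup K_2) + q_1 \hat{a}_1(K_0\cup K_1\cup K_2)\bigr),
\]
which is nonzero by hypothesis since $q_0>0$, so $N$ and $N'$ are not orientation-preservingly homeomorphic. The mirror-image reduction is harmless: mirroring negates $\lambda$ and all $\hat a_1$ of two- and three-component sublinks with the appropriate sign, and it preserves the nonvanishing of the hypothesis up to sign.

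The main obstacle is the careful evaluation of $\lambda(N')$: we must verify the signature and sign conventions when one slope is $0$, and confirm that the $\theta$ and Dedekind-sum contributions cancel with the corresponding terms in $\lambda(N)$ exactly, leaving only the $q_0$-dependent term.
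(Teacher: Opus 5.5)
Your proposal is correct and follows the paper's proof essentially step for step. You compute $\lambda(N)$ from Formula~\eqref{fml32} with $(-1)^{b_-(\L)}=p_0$, and $\lambda(N')$ from Formula~\eqref{eq3+}; that formula applies directly after relabeling, since the first slope $p_1/q_1$ is nonzero and the second is $0\ge 0$. Subtracting gives $p_0q_0\bigl(p_1\hat a_1(K_0\cup K_2)+q_1\hat a_1(K_0\cup K_1\cup K_2)\bigr)$, exactly as in the paper.

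One small precision for your mirror remark. Since $\hat a_k(L^*)=(-1)^{\mu-1}\hat a_k(L)$, mirroring negates $\hat a_1$ of two-component links but leaves it unchanged for knots and three-component links. Combined with $p_1\mapsto -p_1$, the hypothesis quantity is therefore exactly invariant, not just invariant up to sign.
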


\begin{proof}
Let $N$ and $N'$ be the 3-manifolds obtained by $(p_0/q_0, p_1/q_1, 0/1)$-surgery on $L$ and $( p_1/q_1, 0/1)$-surgery on $K_1 \cup K_2$, respectively, assuming that $p_0 = \pm 1$ and $p_1/q_1 \ne 0$.  
Taking the mirror image if necessary, we may assume that $p_1>0$. 

For $N$, since $(-1)^{b_{-} (\L)} = \mathrm{sign}(p_0)=p_0$, we have the following from Formula~\eqref{fml32}.
\begin{align*}
\lambda(N) 
=& \ 
p_0
\Bigl( p_0 p_1 \hat{a}_1(K_2) 
+ q_0 p_1 \hat{a}_1(K_0 \cup K_2) \\
& \ +p_0 q_1 \hat{a}_1(K_1 \cup K_2)
+ q_0 q_1 \hat{a}_1(K_0 \cup K_1 \cup K_2) 
- \frac{p_0 p_1}{12} \Bigr) \\
=& \ 
p_1 \hat{a}_1(K_2) 
+ p_0 q_0 p_1 \hat{a}_1(K_0 \cup K_2) \\
& \ + q_1 \hat{a}_1(K_1 \cup K_2)
+ p_0 q_0 q_1 \hat{a}_1(K_0 \cup K_1 \cup K_2) 
- \frac{p_1}{12}.  
\end{align*}

For $N'$, we have the following from Formula~\eqref{eq3+}.
\begin{align*}
\lambda(N') 
= & \ 
q_1 q_2 
\left( 
\frac{p_2}{q_2} \hat{a}_1(K_1) 
+
\frac{p_1}{q_1} \hat{a}_1(K_2) 
+
\hat{a}_1(K_1 \cup K_2) 
\right) \\
& 
- \frac{q_1 q_2}{24} 
\left(
\frac{p_2}{q_2} \cdot \frac{ p_1^2 + q_1^2 + 1 }{ q_1^2 }
+
\frac{p_1}{q_1} \cdot \frac{ p_2^2 + q_2^2 + 1 }{ q_2^2 }
\right) \\
&
+ \left| p_1 p_2 \right| 
\left(
\frac{\sigma(E(\mathbb{L}))}{8}
+
\frac{s(p_1, q_1) }{2}
+
\frac{s(p_2, q_2) }{2}
\right) \\
=& \  
q_1 
\left( 
\left( \frac{p_1}{q_1} \right) \hat{a}_1(K_2) 
+
\hat{a}_1(K_1 \cup K_2) 
\right)  
- \frac{q_1}{24} 
\left(
2 \frac{p_1}{q_1} 
\right) \\
=& \ 
p_1 \hat{a}_1(K_2) 
+
q_1 \hat{a}_1(K_1 \cup K_2) 
- 
\frac{p_1}{12} .
\end{align*}

Thus, we have
\[
\lambda(N) - \lambda(N')  = 
p_0 q_0 \left( p_1 \hat{a}_1(K_0 \cup K_2) 
+ q_1 \hat{a}_1 (K_0 \cup K_1 \cup K_2) \right). 
\]

Consequently, $(\pm 1/q_0, p_1/q_1, 0/1)$-surgery on $L$ and $( p_1/q_1, 0/1)$-surgery on $K_1 \cup K_2$ give distinct manifolds if $p_1 \hat{a}_1(K_0 \cup K_2) + q_1 \hat{a}_1(K_0 \cup K_1 \cup K_2) \ne 0$. 
\end{proof}

\subsection{Proof of Theorem~\ref{thm5}}

We next consider the case that $p_1=0$ and $q_1 = 1$ in Formula~\eqref{fml32}. 
Let $M_{00}$ be the 3-manifold obtained by $(p_0/q_0, 0/1, 0/1)$-surgery on $L$. 
Then, its Casson--Walker--Lescop invariant is calculated as follows. 
\[
\lambda(M_{00})
=(-1)^{b_{-} (\L)} \left( p_0 \hat{a}_1(K_1 \cup K_2)  + q_0  \hat{a}_1(K_0 \cup K_1 \cup K_2 ) \right)
\]
Then Theorem~\ref{thm5} follows from the following two propositions. 

\begin{proposition}
Let $L = K_0 \cup K_1 \cup K_2$ be an algebraically split, three-component link in $S^3$.
Let $q_0$ and $q'_0$ be distinct positive integers coprime to a non-zero integer $p_0$ and set $p'_0 = \pm p_0$. 
If $\hat{a}_1(K_0 \cup K_1 \cup K_2) \ne 0$, then $(p_0/q_0, 0, 0)$- and $( p'_0/q'_0, 0, 0)$-surgeries on $L$ give distinct manifolds.
\end{proposition}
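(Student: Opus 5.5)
The plan is to compare Casson--Walker--Lescop invariants, exactly as in the analogous propositions for Theorems~\ref{thm3} and \ref{thm4}. Let $N$ and $N'$ be the $3$-manifolds obtained by $(p_0/q_0,0,0)$- and $(p'_0/q'_0,0,0)$-surgeries on $L$. By the formula for $\lambda(M_{00})$ displayed just above,
\[
\lambda(N)=(-1)^{b_-(\L)}\bigl(p_0\hat a_1(K_1\cup K_2)+q_0\hat a_1(K_0\cup K_1\cup K_2)\bigr),
\]
and similarly for $N'$. It suffices to show $\lambda(N)\ne\lambda(N')$.

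The first step is to determine the sign $(-1)^{b_-(\L)}$. The linking matrix is $\mathrm{diag}(p_0/q_0,0,0)$, so the number of negative eigenvalues is $1$ if $p_0<0$ and $0$ if $p_0>0$. Hence $(-1)^{b_-(\L)}=\sign(p_0)$, and likewise $(-1)^{b_-(\L')}=\sign(p'_0)$.

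Next I split into the two cases. If $p'_0=p_0$, the $p_0\hat a_1(K_1\cup K_2)$ terms cancel, and
\[
\lambda(N)-\lambda(N')=\sign(p_0)(q_0-q'_0)\,\hat a_1(K_0\cup K_1\cup K_2).
\]
This is nonzero because $q_0\ne q'_0$. If $p'_0=-p_0$, assume $p_0>0$ without loss of generality. Then
\[
\lambda(N')=-\bigl(-p_0\hat a_1(K_1\cup K_2)+q'_0\hat a_1(K_0\cup K_1\cup K_2)\bigr),
\]
so
\[
\lambda(N)-\lambda(N')=(q_0+q'_0)\,\hat a_1(K_0\cup K_1\cup K_2)\ne0,
\]
since $q_0+q'_0>0$. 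In both cases $N$ and $N'$ have distinct $\lambda$, so they are not orientation-preservingly homeomorphic.

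The only delicate point is the validity of the displayed formula for $\lambda(M_{00})$. It is obtained from Formula~\eqref{fml32} by setting $p_1=0$, $q_1=1$. At that point all remaining terms drop out: the $\theta$ terms carry a factor $p_1p_2$, and the Dedekind and signature terms carry the factor $|p_0p_1p_2|=0$. I also need the sign convention for a degenerate $E(\L)$, namely that $b_-$ counts only strictly negative eigenvalues, which is the convention used in the paper. Beyond that, the argument is routine.
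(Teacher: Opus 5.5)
Your proposal is correct and follows the paper's proof essentially verbatim. You read off $(-1)^{b_-(\L)}=\sign(p_0)$ from the diagonal linking matrix, then get $\lambda(N)-\lambda(N')=\sign(p_0)(q_0-q'_0)\,\hat{a}_1(K_0\cup K_1\cup K_2)$ when $p'_0=p_0$, and $(q_0+q'_0)\,\hat{a}_1(K_0\cup K_1\cup K_2)$ when $p'_0=-p_0$ with $p_0>0$. One small slip, which does not affect the argument: in Formula~\eqref{fml32} the only surviving $\theta$-contribution is $-p_0p_1/12$, and it vanishes because of the factor $p_1$, not $p_1p_2$.
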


\begin{proof}
Let $N$ and $N'$ be the 3-manifolds obtained by $(p_0/q_0,, 0, 0)$- and $(p'_0/q'_0, 0, 0)$-surgeries on $L$, respectively, assuming that $p'_0 = \pm p_0$. 
Then, $(-1)^{b_{-} (\L)} = \sign(p_0)$ holds in this case. 

If $p'_0 = p_0$, then we have
\begin{align*}
\lambda(N) - \lambda(N')
=& \ 
\sign(p_0)
\Bigl( \left( p_0 \hat{a}_1(K_1 \cup K_2)  + q_0  \hat{a}_1(K_0 \cup K_1 \cup K_2) \right)\\
& - \ \left( p_0 \hat{a}_1(K_1 \cup K_2)  + q'_0  \hat{a}_1(K_0 \cup K_1 \cup K_2) \right) \Bigr)\\
=& \ \sign(p_0) ( q_0 - q'_0) \hat{a}_1(K_0 \cup K_1 \cup K_2)  
\end{align*}

If $p'_0 = - p_0$, we may assume without loss of generality that $p_0>0$. 
Then, we have
\begin{align*}
\lambda(N) - \lambda(N')
=& \ 
\Bigl( \left( p_0 \hat{a}_1(K_1 \cup K_2)  + q_0  \hat{a}_1(K_0 \cup K_1 \cup K_2) \right)\\
&- \ 
(-1)\left( (-p_0) \hat{a}_1(K_1 \cup K_2)  + q'_0  \hat{a}_1(K_0 \cup K_1 \cup K_2) \right) \Bigr)\\
=& \ 
( q_0 + q'_0) \hat{a}_1(K_0 \cup K_1 \cup K_2) 
\end{align*}

Therefore, in both cases, $(p_0/q_0, 0, 0)$- and $( p'_0/q'_0, 0, 0)$-surgeries on $L$ give distinct manifolds if $\hat{a}_1(K_0 \cup K_1 \cup K_2) \ne 0$. 
\end{proof}

\begin{proposition}
Let $L = K_0 \cup K_1 \cup K_2$ be an algebraically split, three-component link in $S^3$.
Let $q_0$ be a positive integer and set $p_0 = \pm 1$. 
If $\hat{a}_1(K_0 \cup K_1 \cup K_2) \ne 0$, then $(p_0/q_0, 0, 0)$-surgery on $L$ and $(0, 0)$-surgery on $K_1 \cup K_2$ give distinct manifolds.
\end{proposition}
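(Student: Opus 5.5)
We compare Casson--Walker--Lescop invariants, exactly as in the analogous propositions for Theorems~\ref{thm3} and~\ref{thm4}. Let $N$ be the manifold obtained by $(p_0/q_0,0,0)$-surgery on $L$ with $p_0=\pm1$, and let $N'$ be the manifold obtained by $(0,0)$-surgery on $K_1\cup K_2$. It suffices to show that $\lambda(N)\ne\lambda(N')$.

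\textbf{Computing $\lambda(N)$.} The linking matrix is $\mathrm{diag}(p_0/q_0,0,0)$, so $(-1)^{b_-(\L)}=\sign(p_0)=p_0$. The formula for $\lambda(M_{00})$ displayed just before the preceding proposition then gives
\[
\lambda(N)=p_0\bigl(p_0\hat a_1(K_1\cup K_2)+q_0\hat a_1(K_0\cup K_1\cup K_2)\bigr)=\hat a_1(K_1\cup K_2)+p_0q_0\,\hat a_1(K_0\cup K_1\cup K_2).
\]

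\textbf{Computing $\lambda(N')$.} I would apply Formula~\eqref{eq3+} to the two-component link $K_1\cup K_2$ with $l=0$, $p_1=p_2=0$ and $q_1=q_2=1$.
\begin{itemize}
\item Every term carrying a factor $p_1$ or $p_2$ vanishes. This removes the $\hat a_1(K_i)$ terms and the $\theta$ terms for singletons.
\item The $J=\{1,2\}$ theta term is $2l^3+2l^2(\cdots)-2l=0$.
\item The last term carries the factor $|p_1p_2-l^2q_1q_2|=0$, so it vanishes as well.
\end{itemize}
What remains is $(-1)^{b_-(\L)}\hat a_1(K_1\cup K_2)$.

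\textbf{Main obstacle: the sign for $N'$.} The linking matrix of $N'$ is the zero matrix, so $b_-=0$ and $(-1)^{b_-}=1$. One should check that the convention in \eqref{eq:LescopFormula} indeed uses $b_-$, the number of negative eigenvalues, so the degenerate zero eigenvalues contribute no sign. This is the only subtle point; it is consistent with the computation of $\lambda(N')=\hat a_1(K)-1/12$ for $0$-surgery on a knot used earlier. Hence $\lambda(N')=\hat a_1(K_1\cup K_2)$.

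\textbf{Conclusion.} Subtracting the two values,
\[
\lambda(N)-\lambda(N')=p_0q_0\,\hat a_1(K_0\cup K_1\cup K_2)=\pm q_0\,\hat a_1(K_0\cup K_1\cup K_2).
\]
This is nonzero because $q_0\ge1$ and $\hat a_1(K_0\cup K_1\cup K_2)\ne0$ by hypothesis. Since $\lambda$ is an invariant of oriented manifolds, $N$ and $N'$ are not orientation-preservingly homeomorphic, which proves the proposition.
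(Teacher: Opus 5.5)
Your proposal is correct and is essentially the paper's proof. You obtain $\lambda(N)=\hat a_1(K_1\cup K_2)+p_0q_0\,\hat a_1(K_0\cup K_1\cup K_2)$ from the formula for $\lambda(M_{00})$ with $(-1)^{b_-(\L)}=p_0$, and $\lambda(N')=\hat a_1(K_1\cup K_2)$ from Formula~\eqref{eq3+}, so the difference $p_0q_0\,\hat a_1(K_0\cup K_1\cup K_2)$ is nonzero.

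Your explicit check that the zero linking matrix has $b_-=0$ (hence sign $+1$) is a worthwhile precaution. The case table for $(-1)^{b_-(\L)}$ preceding \eqref{eq3+} assumes a nonzero first slope, and its ``otherwise'' clause would wrongly assign $-1$ to the zero matrix.
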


\begin{proof}
Let $N$, $N'$ be the 3-manifolds obtained by $(p_0/q_0, 0, 0)$-surgery on $L$ and $( 0, 0)$-surgery on $K_1 \cup K_2$, respectively, assuming that $p_0 = \pm 1$. 

For $N$, since $(-1)^{b_{-} (\L)} = \mathrm{sign}(p_0)=p_0$, we have the following from Formula~\eqref{fml32}.
\begin{align*}
\lambda(N) 
&= 
p_0
\Bigl( p_0 \hat{a}_1(K_1 \cup K_2)
+ q_0 \hat{a}_1(K_0 \cup K_1 \cup K_2) \Bigr) \\
&= 
\hat{a}_1(K_1 \cup K_2)
+ p_0 q_0 \hat{a}_1(K_0 \cup K_1 \cup K_2) .
\end{align*}

For $N'$, we have the following from Formula~\eqref{eq3+}.
\[
\lambda(N') 
= \hat{a}_1(K_1 \cup K_2). 
\]

Thus, we have
\[
\lambda(N) - \lambda(N')  = 
p_0 q_0 \hat{a}_1(K_0 \cup K_1 \cup K_2). 
\]

Consequently, $(\pm 1/q_0, 0, 0)$-surgery on $L$ and $( 0, 0)$-surgery on $K_1 \cup K_2$ give distinct manifolds if $\hat{a}_1(K_0 \cup K_1 \cup K_2) \ne 0$. 
\end{proof}

\section{Examples}

We exhibit some examples concerning Theorems~\ref{thm3}--\ref{thm5}. 

\begin{example}
Let $W$ be the Whitehead link in $S^3$ as depicted in Figure~\ref{fig:WhiteheadLink}. 
Its Conway polynomial is calculated as $\nabla_{W} (z) = z^3$, and so $\hat{a}_1(W) = -a_3(W) = -1$. 
Then, by Theorem~\ref{thm3}, the knot in $S^2 \times S^1$, which is coming from one component of $W$, has no purely cosmetic surgery, since $S^2 \times S^1$ is obtained by $0$-surgery on another component of $W$. 
In addition, let $L_m$ be the link as depicted in Figure~\ref{fig:WhiteheadLink}, where $m$ is a non-zero integer. 
Note that $L_1 = W$, and we have $\nabla_{L_m} (z) = m z^3$. 
Then, by Theorem~\ref{thm3}, the knot in $S^2 \times S^1$ as depicted in Figure~\ref{fig:Lm} has no purely cosmetic surgery. 
\end{example}

\begin{figure}[htb!]
    \centering
    \begin{overpic}[width=0.5\linewidth]{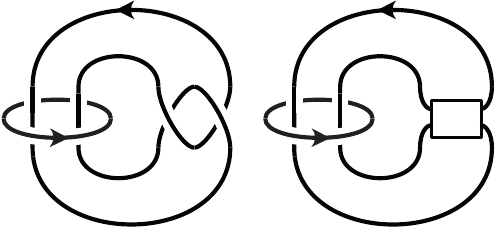}
    \put(89.7,20.5){$m$}    
    \end{overpic}    
    \caption{Whitehead link $W$ and the link $L_m$. The box labeled $m$ indicates $m$ times full-twists.}
    \label{fig:WhiteheadLink}
\end{figure}

\begin{figure}[htb!]
    \centering
    \begin{overpic}[width=0.12\linewidth]{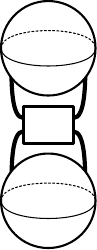}
    \put(15.5,47.8){$m$}    
    \end{overpic}    
    \caption{Identifying two $2$-spheres, we have a knot in $S^2 \times S^1$, which admits no purely cosmetic surgeries.}
    \label{fig:Lm}
\end{figure}

\begin{example}
Next we consider the Borromean rings $B$. 
Its Conway polynomial is calculated as $\nabla_B(z) = z^4$, and so $\hat{a}_1(B) = a_4(B) = 1$. 
By Theorem~\ref{thm4}, the knot coming from one component of $B$ in the connected sum of $S^2 \times S^1$ and any lens space has no purely cosmetic surgery. 
Also, by Theorem~\ref{thm5}, the knot coming from one component of $B$ in the connected sum of two $S^2 \times S^1$'s has no purely cosmetic surgery. 
\end{example}

\appendix 
\section{Confirming several formulae}

We first recover the surgery formula of the Casson invariant given by Boyer and Lines \cite{BoyerLines} from Lescop's formula~\eqref{eq:LescopFormula}. 

Let $M$ be the 3-manifold obtained by $p/q$-surgery on a knot $K$ in $S^3$. 
Suppose that $p,q>0$ for simplicity. 
Then, from Lescop's formula~\eqref{eq:LescopFormula}, we have the following. 
\begin{align*}
\lambda(M) 
&=
 q \hat{a}_1(K) - q \left( \frac{p^2 + q^2 +1}{24 q^2} \right) + q \left| \frac{p}{q} \right| \left( \frac{1}{8} + \frac{s(p,q)}{2} \right) \\
&=
q \hat{a}_1(K) - \left( \frac{p^2 + q^2 +1}{24 q} \right) + p \left( \frac{1}{8} + \frac{s(p,q)}{2} \right) .  
\end{align*}

The right-hand side above is transformed as follows. 
\begin{align*}
& q \hat{a}_1(K) - \left( \frac{p^2 + q^2 +1}{24 q} \right) + p \left( \frac{1}{8} + \frac{s(p,q)}{2} \right)\\
&=
q a_2(K) -\frac{p^2+q^2+1}{24q}
+\frac{p}{8}+\frac{p}{2} \cdot \frac{p^2+q^2+1-3pq}{12pq}-\frac{p}{2}s(q,p) \\
&=
q a_2(K)-\frac{p^2+q^2+1}{24q}+\frac{p}{8}+\frac{p^2+q^2+1-3pq}{24q}-\frac{p}{2}s(q,p)\\
&=
qa_2(K)+\frac{-p^2-q^2-1+3pq+p^2+q^2+1-3pq}{24q}-\frac{p}{2}s(q,p)\\
&=q a_2(K)-\frac{p}{2}s(q,p)    
\end{align*}
Here we used the following reciprocity law of the Dedekind sum. 
\[
s(p,q)+s(q,p)=\frac{p^2+q^2+1-3pq}{12pq}. 
\]

Thus, we can recover the following formula given in \cite{BoyerLines}. 
\[
\lambda (M) = q a_2(K)-\frac{p}{2}s(q,p).    
\]

Next we confirm that the rational surgery formula of the Casson--Walker invariant $\lambda_w$ for 2-component links in $S^3$ given by Ito~\cite[Theorem 1.5]{Ito2022} is derived from Equation~\eqref{eq:LescopFormula}. 

Let $L=K_1 \cup K_2$ be a 2-component link in $S^3$ with $l = \lk(K_1, K_2)$. 
Let $M$ be a 3-manifold with a surgery presentation $\L = \set{(K_1, p_1/q_1), (K_2, p_2/q_2)}$. 
Suppose that $q_1, q_2 > 0$. 
As in \cite[p.13, T5.0]{Lescop96}, the Casson--Walker invariant $\lambda_w$ and the Casson--Walker--Lescop invariant $\lambda$ of $M$ are related by 
\begin{align}
    \lambda(M) &= \dfrac{|H_1(M)|}{2} \lambda_w(M) \notag \\
    &=(-1)^{b_-(\L)}q_1 q_2\left( \dfrac{p_1p_2}{q_1q_2} -l^2\right) \dfrac{\lambda_w(M)}{2}. \label{eq:CW-CSL}
\end{align}
The following formula of $\lambda_w$ is due to Ito~\cite{Ito2022}. 
\begin{proposition}[{\cite[Theorem 1.5]{Ito2022}}]\label{prop:Ito} 
If a rational homology 3-sphere $M$ is represented by $\L = \set{(K_1,p_1 / q_1), (K_2,p_2 / q_2)}$, then its Casson--Walker invariant $\lambda_w(M)$ is given by 
\begin{align*}
\begin{aligned}
&\left(\dfrac{p_1p_2}{q_1q_2} -l^2 \right) \left(\frac{\lambda_w(M)}{2}-\frac{\sigma(E(\L))}{8}\right) \\
&= a_2(K_1)\frac{p_2}{q_2}-\frac{p_2}{24q_2} -\frac{p_2}{24q_2q_1^2} +\frac{p_2 l^2}{24q_2}
+a_2(K_2)\frac{p_1}{q_1} -\frac{p_1}{24q_1}-\frac{p_1}{24q_1q_2^2}+\frac{p_1 l^2}{24q_1}\\
&\quad -a_3(L)+(a_2(K_1)+a_2(K_2)) l +\frac{1}{12}(l^3-l) \\ 
&\quad + \left(\dfrac{p_1p_2}{q_1q_2} -l^2 \right)\left( \dfrac{s(p_1,q_1)}{2} -\frac{p_1}{24q_1} + \dfrac{s(p_2,q_2)}{2}-\frac{p_2}{24q_2} \right).
\end{aligned}
\end{align*}    
\end{proposition}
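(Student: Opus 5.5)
We derive the displayed identity from Lescop's formula, specialized to two components as in Formula~\eqref{eq3+}, together with the relation \eqref{eq:CW-CSL} between $\lambda$ and $\lambda_w$. Relabel the indices $0,1$ in \eqref{eq3+} as $1,2$. Since $M$ is a rational homology sphere, $\det E(\L)\neq 0$. A nonzero determinant of a symmetric $2\times 2$ matrix has the sign of the product of its eigenvalues, so $\sign \det E(\L) = (-1)^{b_-(\L)}$. Using $q_1,q_2>0$, this gives
\[
|p_1p_2 - l^2q_1q_2| = (-1)^{b_-(\L)}\, q_1q_2 \det E(\L), \qquad \det E(\L)=\frac{p_1p_2}{q_1q_2}-l^2 .
\]
Thus every term of \eqref{eq3+} carries the common factor $(-1)^{b_-(\L)}q_1q_2$, and so does the right-hand side of \eqref{eq:CW-CSL}. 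Dividing both expressions for $\lambda(M)$ by this factor and equating them gives
\[
\det E(\L)\left(\frac{\lambda_w(M)}{2}-\frac{\sigma(E(\L))}{8}-\frac{s(p_1,q_1)}{2}-\frac{s(p_2,q_2)}{2}\right)
\]
on the left. On the right we get the $\hat a_1$-terms and the $\theta$-terms of \eqref{eq3+} with the prefactor $(-1)^{b_-(\L)}q_1q_2$ stripped off.

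Next we convert coefficients using the Remark in Section~1: $\hat a_1(K_i)=a_2(K_i)$ for knots and $\hat a_1(K_1\cup K_2) = -a_3(L)$ for the two-component link. The $\hat a_1$-part then becomes
\[
\left(\tfrac{p_2}{q_2}+l\right)a_2(K_1)+\left(\tfrac{p_1}{q_1}+l\right)a_2(K_2)-a_3(L).
\]
This is exactly the collection of $a_2$, $a_3$ and $(a_2(K_1)+a_2(K_2))l$ terms in Proposition~\ref{prop:Ito}.

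The main step is rearranging the $\theta$-part,
\[
-\frac{1}{24}\left(\frac{p_2}{q_2}\cdot\frac{p_1^2+q_1^2+1}{q_1^2}+\frac{p_1}{q_1}\cdot\frac{p_2^2+q_2^2+1}{q_2^2}\right)+\frac{1}{12}\left(l^3+l^2\Bigl(\frac{p_1}{q_1}+\frac{p_2}{q_2}\Bigr)-l\right).
\]
We split it term by term. The pieces $-\frac{p_2}{24q_2}$ and $-\frac{p_2}{24q_2q_1^2}$ match Ito's formula directly, as do the symmetric ones with $1$ and $2$ exchanged, and so does $\frac{1}{12}(l^3-l)$. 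For the cubic piece we write
\[
\frac{p_2p_1^2}{q_2q_1^2}=\frac{p_1}{q_1}\Bigl(\det E(\L)+l^2\Bigr).
\]
Its $\det E(\L)$-part gives $-\det E(\L)\,\frac{p_1}{24q_1}$, which we move to the left-hand side as the term $-\frac{p_1}{24q_1}$ inside Ito's last bracket. Its $l^2$-part gives $-\frac{1}{24}l^2\frac{p_1}{q_1}$, which combines with $+\frac{1}{12}l^2\frac{p_1}{q_1}$ to leave $+\frac{p_1l^2}{24q_1}$. The same happens symmetrically for index $2$.

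After moving the Dedekind sum terms and the $-\frac{p_i}{24q_i}$ terms to the right, inside the bracket multiplied by $\det E(\L)$, we obtain precisely the stated identity. The only delicate point is the sign bookkeeping, namely $(-1)^{b_-(\L)}$ versus $|\det|$, and the sign change between $\hat a_1$ and $a_3$. We check these carefully with the first display above and the conversion formula from the Remark.
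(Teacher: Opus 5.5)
Your proposal is correct and is essentially the paper's own argument in the Appendix. The paper cites the result from Ito and then checks it against Lescop's formula: it specializes \eqref{eq:LescopFormula} to two components, uses $\lambda=\frac{|H_1(M)|}{2}\lambda_w$ together with $\sign\det E(\L)=(-1)^{b_-(\L)}$, converts $\hat a_1(K_i)=a_2(K_i)$ and $\hat a_1(K_1\cup K_2)=-a_3(L)$, and matches terms, exactly as you do. The only difference is that you write out the splitting $\frac{p_1^2p_2}{q_1^2q_2}=\frac{p_1}{q_1}\bigl(\det E(\L)+l^2\bigr)$, which the paper leaves as ``we can see''; moving the $-\frac{p_i}{24q_i}$ terms across and back is redundant but harmless bookkeeping.
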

Proposition~\ref{prop:Ito} yields that 
\begin{align}\label{eq:Ito}
\begin{aligned}
&(-1)^{b_-(\L)}q_1 q_2\left( \dfrac{p_1p_2}{q_1q_2} -l^2\right) \dfrac{\lambda_w(M)}{2}\\
&= (-1)^{b_-(\L)}q_1 q_2 \Biggl(a_2(K_1)\frac{p_2}{q_2}-\frac{p_2}{24q_2} -\frac{p_2}{24q_2q_1^2} +\frac{p_2 l^2}{24q_2} \\
& \qquad \qquad \qquad \qquad +a_2(K_2)\frac{p_1}{q_1} -\frac{p_1}{24q_1}-\frac{p_1}{24q_1q_2^2}+\frac{p_1 l^2}{24q_1} \\
&\qquad \qquad \qquad \qquad -a_3(L)+(a_2(K_1)+a_2(K_2)) l +\frac{1}{12}(l^3-l) \\ 
&\qquad + \left(\dfrac{p_1p_2}{q_1q_2} -l^2 \right)\left( \frac{\sigma(E(\L))}{8} + \dfrac{s(p_1,q_1)}{2}-\frac{p_1}{24q_1} + \dfrac{s(p_2,q_2)}{2}-\frac{p_2}{24q_2} \right)\Biggr). 
\end{aligned}
\end{align}

On the other hand, applying Formula~\eqref{eq:LescopFormula} for $\L = \set{(K_1,p_1 / q_1), (K_2,p_2 / q_2)}$, we have 
\begin{align}\label{eq:LescopFor2comp}
\begin{aligned}
    &\lambda(M) \\
    &=(-1)^{b_-(\L)} q_1q_2 \left( 
    \det E(\L_{\{2\}};\{1\}) \hat{a}_1(L_{\{1\}}) 
    +\det E(\L_{\{1\}};{\{2\}}) \hat{a}_1(L_{\{2\}}) 
    +\hat{a}_1(K_1 \cup K_2) \right) \\ 
    &\quad +\frac{(-1)^{b_-(\L)} q_1 q_2}{24}\left(-\frac{p_2}{q_2}\left(\frac{p_1^2+q_1^2+1}{q_1^2}\right)-\frac{p_1}{q_1}\left(\frac{p_2^2+q_2^2+1}{q_2^2}\right) +\theta(\L_{\{1,2\}}) \right) \\ 
    &\quad +|p_1p_2-q_1q_2l^2|\left(\frac{\sigma(E(\L))}{8} + \frac{s(p_1,q_1)}{2} + \frac{s(p_2,q_2)}{2} \right)  \\ 
    &= (-1)^{b_-(\L)} q_1q_2 \left( \left( \frac{p_2}{q_2} + l \right) \hat{a}_1(K_1) + \left( \frac{p_1}{q_1} +l \right) \hat{a}_1(K_{2}) +\hat{a}_1(K_1 \cup K_2) \right) \\ 
    &\quad +\frac{(-1)^{b_-(\L)} q_1 q_2}{24}
    \Biggl( -\frac{p_2}{q_2}\left(\frac{p_1^2+q_1^2+1}{q_1^2} \right)-\frac{p_1}{q_1}\left(\frac{p_2^2+q_2^2+1}{q_2^2}\right) \\
    & \quad +2 l^2 \left( \frac{p_1}{q_1} + \frac{p_2}{q_2} \right) + 2 l^3 -2l \Biggr) 
    +\left| p_1 p_2 - q_1 q_2 l^2 \right| \left(\frac{\sigma(E(\L))}{8} + \frac{s(p_1,q_1)}{2} + \frac{s(p_2,q_2)}{2} \right) \\ 
    &= (-1)^{b_-(\L)} q_1 q_2\Biggl(\frac{p_2}{q_2}\left(a_2(K_1) -\frac{p_1^2+q_1^2+1}{24q_1^2} \right)+\frac{p_1}{q_1}\left(a_2(K_2) -\frac{p_2^2+q_2^2+1}{24q_2^2}\right) \\ 
    &\quad +l (a_2(K_1)+ a_2(K_2)) - a_3(L) + \frac{l^2}{12}(\frac{p_1}{q_1} + \frac{p_2}{q_2}) +\frac{l(l^2 -1)}{12} \Biggr) \\ 
    &\quad +|p_1p_2-q_1q_2l^2|\left(\frac{\sigma(E(\L))}{8} + \frac{s(p_1,q_1)}{2} + \frac{s(p_2,q_2)}{2} \right).  
\end{aligned}
\end{align}
Here, for $E(\L) = \begin{pmatrix} l_{11} & l_{12} \\ l_{21} & l_{22} \end{pmatrix} = \begin{pmatrix} p_1/q_1 & l \\ l & p_2/q_2 \end{pmatrix}$, as noted in Equation~\eqref{eq:theta}, the term $\theta(\L_{\{1,2\}})$ is calculated by 
\begin{align*}
\theta(\L_{\{1,2\}}) &= l_{12}l_{21}l_{11}+l_{11}l_{12}l_{21}+l_{21}l_{12}l_{22}+l_{22}l_{21}l_{12}+l_{12}l_{21}l_{12}+l_{21}l_{12}l_{21}-2l\\
&=2 l^2 p_1/q_1 + 2 l^2 p_2/q_2 + 2 l^3 -2l. 
\end{align*}
See~\cite[p.\ 19]{Lescop96} for details. 
Notice that, since $q_1, q_2 > 0$, 
\[
\sign(p_1p_2-q_1q_2l^2) = \sign\left(\dfrac{p_1p_2}{q_1q_2} -l^2\right), 
\]
and 
\begin{align*}
    \sign\left(\dfrac{p_1p_2}{q_1q_2} -l^2\right) &= \sign\left(\det E(\L) \right) \\ 
    &= \begin{cases}
        1 & \text{if } b_-(\L) = 0, 2, \\ 
        -1 & \text{if } b_-(\L) = 1
    \end{cases}\\ 
    &=(-1)^{b_-(\L)}. 
\end{align*}
Thus, we have $\sign(p_1p_2-q_1q_2l^2) = (-1)^{b_-(\L)}$. 
Then the RHS of Equation~\eqref{eq:LescopFor2comp} is equal to 
\begin{align*}
(-1)^{b_-(\L)} q_1 q_2\Biggl( \frac{p_2}{q_2}\left(a_2(K_1) -\frac{p_1^2+q_1^2+1}{24q_1^2} \right)+\frac{p_1}{q_1} \left(a_2(K_2) -\frac{p_2^2+q_2^2+1}{24q_2^2}\right) \\ 
    \quad +l (a_2(K_1)+ a_2(K_2)) - a_3(L) + \frac{l^2}{12} \left( \frac{p_1}{q_1} + \frac{p_2}{q_2} \right) +\frac{l(l^2 -1)}{12} \\ 
    \quad +\left(\frac{p_1p_2}{q_1q_2} - l^2\right)\left(\frac{\sigma(E(\L))}{8} + \frac{s(p_1,q_1)}{2} + \frac{s(p_2,q_2)}{2} \right)
    \Biggr) .  
\end{align*}
Then we can see that the RHS of Equation~\eqref{eq:Ito} equals the RHS of \eqref{eq:LescopFor2comp}.

\bibliographystyle{amsplain}
\bibliography{IJT2025}

\end{document}